\def\addlegendimage{\csname pgfplots@addlegendimage\endcsname}
\newtheorem{theorem}{Theorem}
\newtheorem{lemma}[theorem]{Lemma}
\newtheorem{proposition}[theorem]{Proposition}
\newtheorem*{theorem*}{Theorem}
\newtheorem{assumption}{Assumption}
\theoremstyle{remark}
\newtheorem{example}{Example}
\newtheorem{remark}{Remark}
\newtheorem*{remark*}{Remark}
\def\P{\mathrm{P}}
\def\E{\mathrm{E}}
\def\e{\mathrm{e}}
\def\Var{\mathrm{Var}}
\def\Cov{\mathrm{Cov}}
\def\Re{\mathrm{Re}}
\def\Im{\mathrm{Im}}
\newcommand{\apost }{'}
\newcommand{\darrow}{\:\to_{\rm d}\:}
\newcommand{\asarrow}{\:\to_{\rm as}\:}
\newcommand*\diff{\mathop{}\!\mathrm{d}}
\begin{document}

\title{Nonparametric estimation of the job-size distribution for an M/G/1 queue with Poisson sampling\footnote{To appear in Queueing Systems: Theory and Applications}}

\author{Liron Ravner}
\affil{\small{Department of Statistics, University of Haifa}}

\date{\today}
\maketitle

\begin{abstract}
This work presents a non-parametric estimator for the cumulative distribution function (CDF) of the job-size distribution for a queue with compound Poisson input. The workload process is observed according to an independent Poisson sampling process. The nonparametric estimator is constructed by first estimating the characteristic function (CF) and then applying an inversion formula. The convergence rate of the CF estimator at $s$ is shown to be of the order of $s^2/n$, where $n$ is the sample size. This convergence rate is leveraged to explore the bias-variance tradeoff of the inversion estimator. It is demonstrated that within a certain class of continuous distributions, the risk, in terms of MSE, is uniformly bounded by $C n^{-\frac{\eta}{1+\eta}}$, where $C$ is a positive constant and the parameter $\eta>0$ depends on the smoothness of the underlying class of distributions.  A heuristic method is further developed to address the case of an unknown rate of the compound Poisson input process.
\end{abstract}

\noindent\textbf{Keywords} 	Nonparametric inference ; Stochastic storage system ; Risk convergence rate ; M/G/1 queue ; Compound Poisson process ; Job-size estimation ; Inversion formula.

%%%%%%%%%%%%%%%%%%%%%%%%%%%%%%%%%%
\section{Introduction}\label{sec:intro}

Mathematical models of stochastic storage systems have a wide range of applications, such as queueing systems, insurance risk, inventory systems and communication networks (see \cite{book_P1998}). The analysis of a stochastic system's performance depends on assumptions about model primitives. These include the arrival rates, jumps in the buffer content (or workload) and processing rates. However, many systems have inherent uncertainty regarding the distributions of the underlying primitives. Statistical inference is therefore a crucial step in predicting performance analysis and subsequent design and control decisions. In many cases, the challenge lies in the indirect nature of the inference problem; the random variable of interest is not directly observed, but rather some other dependent outcome. Specifically, this work focuses on estimating the jump-size distribution without observing the actual realizations, but rather periodic observations of the system workload. An additional challenge is that observations from storage systems, such as workload observations, typically have an elaborate dependence structure, making standard statistical techniques unsuitable. Specially tailored methods are therefore often required, and the focus is often on parametric models.  Here we aim to apply state of the art nonparametric statistical analysis in order to investigate the worst-case performance of an estimation procedure that takes into account the dependence structure of the workload process.  

We consider a system where iid jobs with a continuous distribution $G$ arrive according to a Poisson process with rate $\lambda$ and are processed at a unit rate. This model is known as the canonical M/G/1 queue, although it also often used to model other processes, e.g., risk processes and communication networks. The job-size distribution is crucial in determining the system's performance. For instance, the mean sojourn time is determined by the first two moments of $G$ (see \cite{book_A2003}). However, in many practical applications, the job-size distribution is unknown and needs to be estimated from aggregate data.  This work presents a nonparametric estimator for the jump size cumulative distribution function (CDF) $G$ from discrete observations of the workload, i.e., a reflected version of the compound Poisson input process with an additional drift term.  Worst-case convergence rate guarantees are established for the risk associated with the estimator by combining asymptotic methods drawing inspiration from those applied in statistical deconvolution problems (e.g., \cite{F1991}) together with the ergodic properties of the workload process (e.g., \cite{TT1994}). 

In our setting the workload process is periodically probed by an external independent Poisson process.  The workload observations are first used to estimate the characteristic function (CF) of the job-sizes. This step relies on modifying an estimator for the characteristic exponent of the input to a L\'evy-driven queue which was given in \cite{RBM2019}. The estimator for $G$ is then obtained by applying an inversion formula. The asymptotic performance of the estimator is analysed via a mean square error risk function. For a certain class of distributions $\mathcal{G}$, the risk is shown to converge to zero at a rate of order $n^{-\eta/(1+\eta)}$, where $\eta>0$ is a smoothness parameter of $\mathcal{G}$. The $L^2$-convergence rate of the CF estimation error is established as a first step, which in turn relies on the (polynomially-ergodic) convergence rate of the workload process to its stationary distribution. As the estimator requires numerical integration, there is an inherent bias associated with any finite-sample estimator. The bias-variance tradeoff is captured via a truncation parameter for the numerical integration, which plays a crucial role in establishing the convergence rate of the risk. Finally, the estimation procedure is extended to the case where the arrival rate $\lambda$ is unknown and must be simultaneously estimated.

\vspace{0.5cm}
For a complex number $z\in\mathbb{C}$ we denote the real and imaginary parts by $\Re\{z\}$ and $\Im\{z\}$, respectively. The complex conjugate of $z$ is denoted by $\overline{z}$, and the absolute value is defined as the Euclidean distance on the complex plane, $|z|=\sqrt{\Re\{z\}^2+\Im\{z\}^2}$. For a complex valued function $g:\mathbb{R}\to\mathbb{C}$, we denote the derivative by $g'(s)=\frac{\diff}{\diff s}\Re\{g(s)\}+i\frac{\diff}{\diff s}\Im\{g(s)\}$. 

%%%%%%%%%%%%%%%%%%%%%%%%%%%%%%%%%%
\subsection{Model and objective}\label{sec:estim}

Jobs arrive at a single-server queue according to a Poisson process $(N(t))_{t\geq 0}$ with rate $\lambda$. Each job has an independent size with a common continuous distribution $\P(B\leq x)= G(x)$ and CF characteristic function $\gamma(s)=\E[\e^{is B}]$. The input process is therefore a compound Poisson process, denoted by $J(t)=\sum_{i=1}^{N(t)}B_i$. Let $X(t)=J(t)-t$ denote the net-input process. The workload process $(V(t))_{t\geq 0}$, representing the virtual waiting time of a potential arrival at time $t\geq 0$,is the net input process reflected at zero: $V(t)=X(t)+\max\{V(0),L(t)\}$, where $L(t):=-\inf_{0\leq s\leq t}X(s)$ (see \cite{book_A2003}). The characteristic exponent of the net input process is
\begin{equation}\label{eq:phi_LST}
\varphi(s):=\log\E[\e^{is X(1)}]=\lambda\left(\gamma(s)-1\right)-is\ , s\in\mathbb{R}\ .
\end{equation}
If $\rho=\lambda\E[B]<1$ the workload process has a stationary distribution that is given by the generalized Pollaczek–Khinchine (GPK) formula (e.g., \cite{DM2015})
\begin{equation}\label{eq:GPK_V}
\E[\e^{is V}]=\frac{s\P(V=0)}{\varphi(s)}\ ,  s\in\mathbb{R}\  .
\end{equation}

Suppose that the arrival rate $\lambda$ is known and $G$ is an unknown continuous distribution. The workload process is sampled according to an independent Poisson sampling process with rate $\xi>0$. Let $(T_j)_{j\geq 1}$ denote the event times of the sampling process and denote the $i$th workload observation by $V_j:=V(T_j)$. Given a sample of workload observations $(V_0,V_1,\ldots,V_n)$ the following pointwise estimator for the characteristic exponent $\varphi(s)$ was constructed in \cite{RBM2019} (and refined in \cite{NMR2024}),
\begin{equation}\label{eq:phi_z_estimator}
\widehat{\varphi}_n(s)=\frac{\frac{\xi}{n}\left(\e^{is V_n}-\e^{is V_0}\right)-\frac{is}{n}\sum_{j=1}^n\mathbf{1}\{V_j=0\}}{\frac{1}{n}\sum_{j=1}^n \e^{is V_{j}}}\ ,
\end{equation}
where $\mathbf{1}\{\}$ denotes the indicator function.  This is a Z-estimator that relies on the explicit form of the workload LST sampled at an exponentially distributed time, conditional on an initial state, which was derived in \cite{KBM2006} (see also \cite[Ch.~4.1]{DM2015}). The estimator is pointwise consistent, i.e., $\widehat{\varphi}_n(s)\asarrow \varphi(s)$ for any $s$, and the estimation errors at any collection of values $(s_1,\ldots,s_d)$ (scaled by $\sqrt{n}$) have a multivariate normal asymptotic distribution. Note that the analysis in \cite{RBM2019} was in terms of the Laplace-Stieltjes transform (LST), i.e., $\E[\e^{-s X(1)}]$ for real $s\geq 0$. The nonparametric estimator presented here relies on a inversion of the CF and will therefore require verification of some of the results for the complex domain.

We leverage the framework of \cite{RBM2019} to estimate a continuous distribution $G$ by applying the following inversion formula (see \cite[Thm.~4.4.3]{book_K1972}),
\begin{equation}\label{eq:G_inv}
G(x)=\frac{1}{2}-\frac{1}{\pi}\int_0^\infty\frac{1}{s} \Im\{\gamma(s)\e^{-i s x}\}\diff s\ .
\end{equation}
The inversion formula, utilized in deconvolution problems (e.g., \cite{DGJ2011}), serves as a natural choice in this context too, owing to the existence of an estimator for the job-size CF via the conditional workload distribution.

The estimator for the CF $\gamma$ is obtained from \eqref{eq:phi_LST} and \eqref{eq:phi_z_estimator},
\begin{equation}\label{eq:gamma_hat}
\widehat{\gamma}_n(s)=\frac{1}{\lambda}(\widehat{\varphi}_n(s)+is)+1\ , s\in\mathbb{R}\ .
\end{equation}
Then, plugging the CF estimator in the inversion formula \eqref{eq:G_inv} yields
\begin{align*}
\widehat{G}_n^{(h)}(x)=\frac{1}{2}-\frac{1}{\pi}\int_0^h\frac{1}{s} \Im\{\widehat{\gamma}_n(s)\e^{-is x}\}\diff s\ , x\geq 0\ ,
\end{align*}
where $h>0$ is a truncation parameter for the numerical integration. The estimation accuracy is determined by the value of $h$ and the tail behaviour of $\gamma$, which are directly related to the bias and variance of the estimation error. Investigating this tradeoff and its implications on the asymptotic performance of the estimator is the main objective of this work. The following example provides initial insight on this tradeoff and the estimator's performance.

%%%
\begin{example}\label{exmp:gamm_mix}\textit{Mixture of Gamma distributions:}
Suppose that the jobs sizes follow a mixture of Gamma distributions. Specifically, let $\bm{\alpha}=(\alpha_1,\ldots,\alpha_d)$, $\bm{\beta}=(\beta_1,\ldots,\beta_d)$ and $\bm{p}=(p,\ldots,p_d)$, where $\alpha_j,\beta_j,p_j>0$ for all $j=1,\ldots,d$, and $\sum_{j=1}^dp_j=1$. The job-size CDF is
\begin{align*}
G(x)=\sum_{j=1}^d p_j\int_0^x \frac{x^{\alpha_j}\e^{-\beta_j x}\beta_j^{\alpha_j}}{\Gamma(\alpha_j)}\diff x \ , x\geq 0\ ,
\end{align*}
and the corresponding CF is
\begin{equation}\label{eq:g_MG}
\gamma(s) =\sum_{j=1}^d p_j \left(\frac{\beta_j}{\beta_j-is}\right)^{\alpha_j} \ , s\in\mathbb{R} \ .
\end{equation}
This distribution is used for illustrative purposes throughout the paper as it is flexible and can approximate many continuous non-negative distributions. 
Consider now the special case with parameters $\bm{\alpha}=(2,6,1)$, $\bm{\beta}=(3.5,90,0.09)$ and $\bm{p}=(0.6,0.35,0.05)$. This is a bimodal distribution as can be seen in Figure~\ref{fig:G_density}. Figure~\ref{fig:Gh} illustrates the estimation accuracy for different choices of $h$ for a simulated sample of workload observations generated for a system with job-size distribution $G$ and arrival rate $\lambda=1$. In the example for smaller values of $h$ the fit to the true function is quite poor, while the best fit appears to be when choosing $h=5$. Interestingly, the estimator with $h=10$ yields a better fit for lower values of $x$, but appears to have greater fluctuations around the true function. This occurs because the estimation error of the CF ${\gamma}(s)$ has a higher variance as $s$ increases. In this work we will quantify this tradeoff and suggest an asymptotically optimal choice of the truncation parameter.
\hfill $\diamond$
\end{example}

 %%%
\begin{figure}[h]
\centering
\begin{subfigure}[b]{0.48\textwidth}
\includegraphics[width=\textwidth,height=6cm]{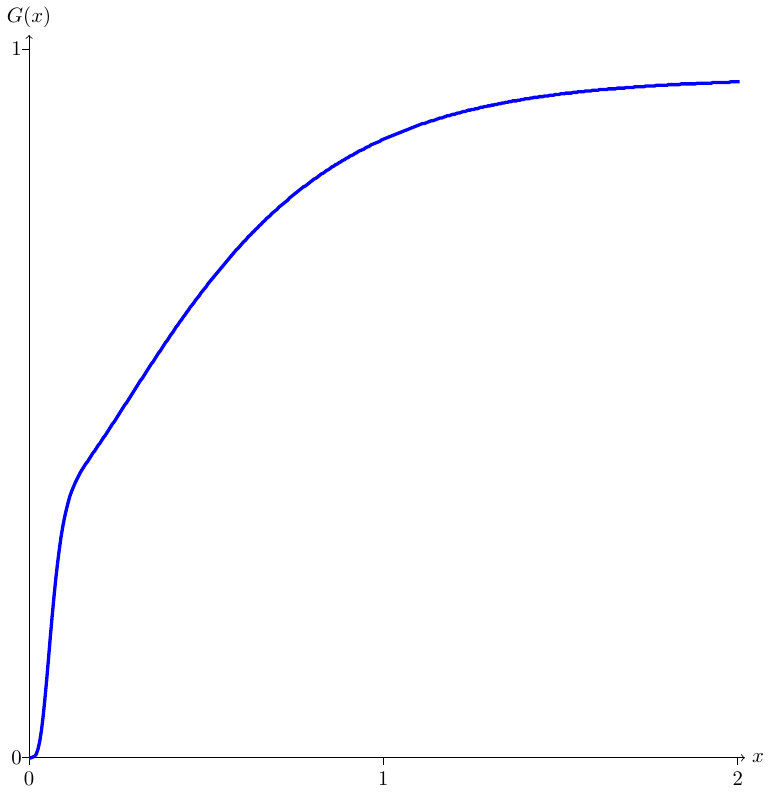}
\caption{CDF $G$.}
  \label{fig:G_cdf}
\end{subfigure}
\begin{subfigure}[b]{0.48\textwidth}
\includegraphics[width=\textwidth,height=6cm]{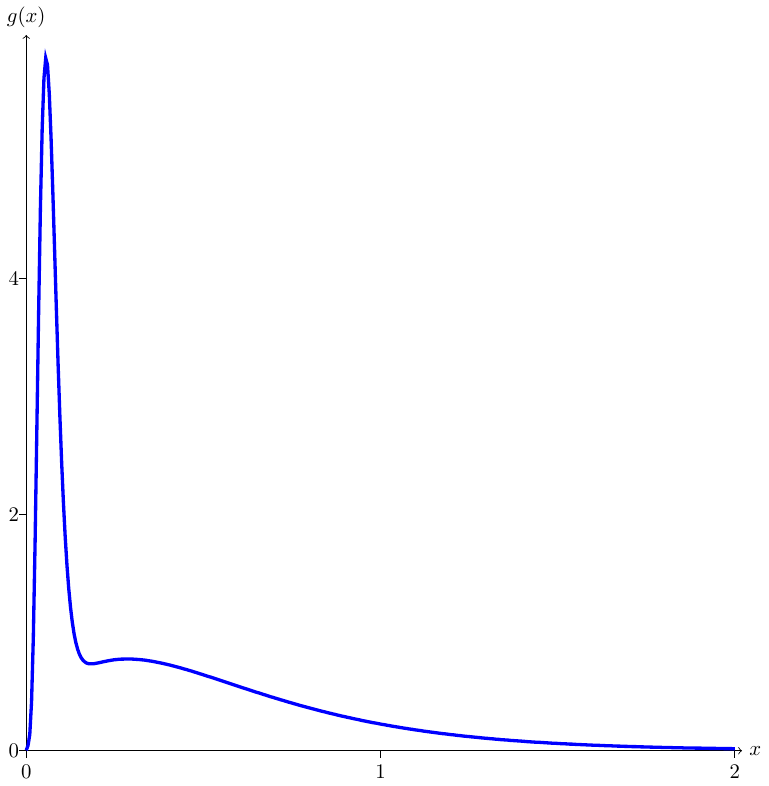}
\caption{Density $g$.}
  \label{fig:G_density}
\end{subfigure}

\caption{The CDF and density for a mixture of gamma distributions with $\bm{\alpha}=(2,6,1)$, $\bm{\beta}=(3.5,90,0.09)$ and $\bm{p}=(0.6,0.35,0.05)$.} \label{fig:G}
\end{figure}

%%%
\begin{figure}[h]
\centering
\includegraphics[width=0.8\textwidth,height=9cm]{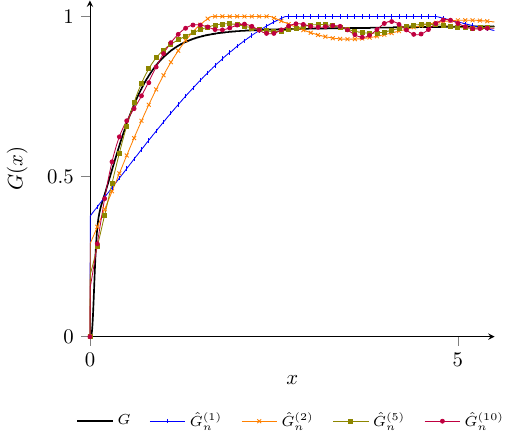}
\caption{The estimators are based on a simulation of $n=10^4$ workload observations with sampling rate $\xi=1$. The arrival rate is $\lambda=1$ and the true CDF $G$ (black solid line) is a mixture of gamma distributions $\bm{\alpha}=(2,6,1)$, $\bm{\beta}=(3.5,90,0.09)$ and $\bm{p}=(0.6,0.35,0.05)$. The estimated functions $\widehat{G}_n^{(h)}$ are plotted for $h\in\{1,2,5,10\}$.} \label{fig:Gh}
\end{figure}

Moving back to the general case, we now construct a refined version of the estimator. By \eqref{eq:phi_z_estimator} and \eqref{eq:gamma_hat}, the estimator $\widehat{\gamma}_n(s)$ can be written as
\begin{align*}
\widehat{\gamma}_n(s)=1-\frac{\frac{is}{n}\sum_{j=1}^n(\mathbf{1}\{V_j=0\}-\e^{is V_j})-\frac{\xi}{n}\left(\e^{is V_n}-\e^{is V_0}\right)}{\frac{\lambda}{n}\sum_{j=1}^n \e^{is V_{j}}}\ .
\end{align*}
Observe, however, that this estimator may not be a proper CF for some realizations of $(V_0,\ldots,V_n)$. In particular, when the denominator on the right hand side is small, then the $|\widehat{\gamma}_n(s)|$ may exceed 1 for some values of $s$. Furthermore, the fact the estimator is unbounded introduces technical difficulties in bounding the mean square error of the estimator. We overcome this by introducing a modified estimator: for some $\epsilon\in(0,1)$,
\begin{equation}\label{eq:gamma_hat_e}
\widehat{\gamma}^\epsilon_n(s)=\left\lbrace\begin{array}{cc}
\widehat{\gamma}_n(s), & \left|\frac{1}{n}\sum_{j=1}^n \e^{is V_{j}}\right|>\epsilon\ , \\
(1-\epsilon)(1+i) & \left|\frac{1}{n}\sum_{j=1}^n \e^{is V_{j}}\right|\leq \epsilon\ .
\end{array}\right.
\end{equation}
The modified estimator is less sensitive to fluctuations of the denominator term in $\widehat{\gamma}_n(s)$. Bounding the variance of these fluctuations will be crucial in establishing the uniform convergence rate of the CDF estimator for a specific class of distributions. 

The CDF estimator relying on inversion of the modified CF estimator is now defined as
\begin{equation}\label{eq:G_hat}
\widehat{G}_n^{(h)}(x)=\frac{1}{2}-\frac{1}{\pi}\int_0^h\frac{1}{s} \Im\{\widehat{\gamma}^\epsilon_n(s)\e^{-is x}\}\diff s\ , x\geq 0\ .
\end{equation} 

%%%
\begin{remark}\label{rem:epsilon1}
For an M/G/1 queue with the usual stability conditions we have that as $n\to\infty$,
\begin{align*}
 \left|\frac{1}{n}\sum_{j=1}^n \e^{is V_{j}}\right| \asarrow \left|\E[\e^{isV}]\right|\in(0,1)\ .
\end{align*}
Note that here we used the PASTA property; the limit at Poisson sampling moments coincides with the time-average limit (see \cite[Ch.ZVI]{book_A2003}). If $\left|\E[\e^{isV}]\right|>\epsilon$, then the estimator is only truncated a finite number of times, almost surely, and the asymptotic performance of the truncated estimator $\widehat{\gamma}^\epsilon_n(\cdot)$ coincides with that of the original estimator $\widehat{\gamma}_n(\cdot)$.  The choice of $(1-\epsilon)(1+i)$ ensures that the truncated CF estimator lies in $(0,1)$, although, in principle other constants may be used without affecting the asymptotic analysis.  In Section~\ref{sec:main} we further discuss how the choice of $\epsilon$ is related to specific assumptions on the class of possible job-size distributions.
\end{remark}

%%%
\begin{remark}\label{rem:decomposition}
It is important to emphasize that the inverse estimation problem deviates from the standard setting in two key aspects. Firstly, it does not involve directly inverting an empirical CF, but instead relies on the relationship between the job-size and workload distributions. Secondly, the workload observations are correlated and exhibit a distinct and specific dependence structure.
\end{remark}

Suppose that $\mathcal{G}$ is a class of cumulative distribution functions that the true distribution belongs to. Let $R(\widehat{G},G;x)=\E[\widehat{G}_n(x)-G(x)]^2 $ denote the mean square error (MSE) of $\widehat{G}$ with respect to the true distribution $G\in\mathcal{G}$. The risk of estimator $\widehat{G}_n$ at $x\geq 0$ is defined as the worst-case MSE,
\begin{equation}\label{eq:risk}
\mathcal{R}_{\mathcal{G}}(\widehat{G}_n;x) =\sup_{G\in\mathcal{G}}R(\widehat{G},G;x)= \sup_{G\in\mathcal{G}}\E[\widehat{G}_n(x)-G(x)]^2 \ .
\end{equation}
The main goal of this work is to provide conditions for which the risk converges to zero, and to provide an upper bound on the convergence rate. A key step towards asymptotic analysis of the risk is establishing a uniform in $\mathcal{G}$ convergence rate of the mean square error of the job-size CF estimator: $\E\left| \widehat{\gamma}_n^\epsilon(s)-\gamma(s)\right| ^2$ for all $s\geq 0$.

%%%%%%%%%%%%%%%%%%%%%%%%%%%%%%%%%%
\subsection{Main contributions}\label{sec:contrib}
\begin{itemize}
\item For a given arrival rate $\lambda$, and a class $\mathcal{G}$ that yields a stable queue and satisfies $\E[B^3]<\infty$, we show that the convergence rate of the mean square error of the job-size CF estimator satisfies
\begin{align*}
\E\left| \widehat{\gamma}_n^\epsilon(s)-\gamma(s)\right| ^2 \approx\frac{s^2}{n}\ \ , \forall s\geq 0\ , \forall G\in\mathcal{G} \ ,
\end{align*}
where $a_n\approx b_n$ means $\lim_{n\to\infty}a_n/b_n=C$ for some constant $C$. This shows that the CF estimator has a parametric rate, but the rate is scaled by a quadratic term with respect to the CF variable $s$. This is a critical step in establishing the nonparemetric convergence rate of the risk.  The reason for this is that computing the CDF estimator $\widehat{G}_n^{(h)}$ defined in \eqref{eq:G_hat} entails integration of $\widehat{\gamma}_n^\epsilon(s)$ until some truncation parameter $h$. Moreover, this result confirms a numerical observation of \cite{RBM2019} that the asymptotic variance of the characteristic exponent $\varphi(s)$ estimation error increases with $s$ at a quadratic rate.
\item For a given arrival rate $\lambda$, by setting an appropriate sequence of truncation parameters $(h_n)_{n\geq 1}$, a uniform upper bound for the convergence rate of the risk is established. In particular, assuming that $\mathcal{G}$ is class of continuous functions with a CF that vanishes in absolute value to zero at least at a polynomial rate $s^{-\eta}$, 
\begin{align*}
\mathcal{R}_{\mathcal{G}}(\widehat{G}_n^{(h_n)};x)= O\left(n^{-\frac{\eta}{1+\eta}}\right) \ , \forall x\geq 0\ ,
\end{align*}
where $h_n=n^{1/(2(1+\eta))}$. This further provides a practical way to select the truncation parameter $h$ for a given sample size. The class of functions $\mathcal{G}$ includes the ordinary-smooth and super-smooth distributions in the terminology of the statistical deconvolution literature (see \cite{F1991}).  Examples of such distributions are Gamma, log-normal, truncated normal, pareto and their mixtures.  Note that this result provides an upper bound on the risk, but does not guarantee rate-optimality, in the sense of \cite{book_T2009}. Establishing a tight lower bound on the risk seems to be a very challenging problem due to the intractability of the joint distribution of the workload observations. Nevertheless, this work is the first to present any type of guarantee on the rate for nonparametric estimation of $G$. 
\item For the proofs of the main results, several auxiliary results pertaining to the autocorrelation function of the workload process sampled at Poisson epochs are established.
\item For the scenario in which the arrival rate is unknown, a heuristic estimation scheme that simultaneously estimates the arrival rate $\lambda$ and job-size distribution $G$ is constructed. Simulation experiments suggest that for large sample sizes the performance of the scheme is very close to that of the benchmark that uses the known $\lambda$.
\item The theoretical results are supplemented with simulation analysis and a discussion of practical implementation challenges. In particular, we examine the sensitivity of the procedure to the truncation parameter $h$. While in the asymptotic regime, $h_n$ must tend to infinity for the bias to vanish, we observe that for finite samples, moderate values of $h$ outperform larger ones. This occurs because the variance of the CF estimator is $O(s^2/n)$, so increasing $h$ incorporates noisier terms into the integral of the inversion formula.  An interesting observation in these experiments is that the estimation accuracy is better when $\rho$ is lower, but the optimal choice of truncation parameter remains more or less the same. The same is observed for a slower sampling rate, which also reduces the variance of estimation errors because the correlation between workload observations is weaker.
\end{itemize}

%%%%%%%%%%%%%%%%%%%%%%%%%%%%%%%%%%
\subsection{Related literature}\label{sec:lit}

Classical queueing theory deals with probabilistic analysis of congested systems and with their corresponding performance such as waiting times. A complementary stream in the literature is focused on the development of statistical inference methods for queueing systems where some of the underlying parameters are unknown. This section reviews a small subset of this literature with the goal of highlighting the context and relevance of the current work. For a recent in depth survey of the literature on statistical inference the reader is referred to \cite{ANT2021}.

The first works on this topic naturally focused on parametric methods, some of which are surveyed in \cite{BR1987}. For example, \cite{BP1988} considered a single-server queue that is observed for some fixed duration of time. This results in a random sample of arrival and service times following some parametric distributions, for which an MLE is computed. The large sample asymptotic performance of the estimator is further studied. In \cite{BBL1996} an MLE is derived for a G/G/1 queue with waiting time observations by assuming the increment in Lindley's recursive formula follows a parametric distribution. For estimators based on waiting time (or queue length) data, asymptotic analysis of the estimation error must take into account the covariance structure of the process (see \cite{R1975}), and this will also play a key role in the analysis presented here. Another key feature in statistical inference for queueing systems is that it typically involves indirect estimation. For example, in \cite{BP1988} waiting times observations are treated as a censored version of the increments in Lindley's formula, i.e., censoring occurs when the waiting time is zero. 

The challenges of indirect estimation related to probing data are considered in detail in \cite{BKV2009}, and estimators for specific models are discussed. A nonparametric estimator for $G$ based on discrete workload observations is provided in \cite{HP2006}. Specifically, Pollaczek–Khinchine's formula is inverted to obtain an estimator of the residual job-size distribution from the empirical workload distribution. However, estimating $G$ from its residual counterpart requires a heuristic step that is shown to be very noisy for systems with very high or very low load. Nonparametric inference of the job-size distribution is mostly carried out for infinite server queues and networks (e.g., \cite{BP1999}, \cite{SW2015} \cite{G2016} and \cite{GK2019}). %For a recent survey of the statistical problems and techniques in a wide range of queueing systems, see \cite{ANT2021}.

The problem of decompounding a compound Poisson sum was studied in \cite{BG2003}. In this setting the goal is to infer $G$ by observing realizations of $\sum_{i=1}^{N(t)}B_i$. Inversion techniques that have been applied for this problem are explored in \cite{vEGS2007} and a Bayesian approach is presented in \cite{GMvdM2020}. Our setting is distinct from the decompounding problem in that we do not observe the compound Poisson process, but rather a process that includes a drift term and is reflected at zero. An inversion estimator for a similar process without reflection, namely a finite intensity subordinator (compound Poisson process with drift), that is observed at discrete time points is given in \cite{G2012}.  A rate optimal density estimator is derived in \cite{D2013} for the jumps of a discretely observed compound Poisson process (without reflection). Estimation of the characteristic exponent of general L\'evy processes from discrete (low frequency) observations is further studied in \cite{CG2009}, \cite{NR2009} and \cite{BR2015}.
The methods applied here are also related to those of nonparametric noise deconvolution problems. In particular, the rate of convergence of the estimator is related to the smoothness class of the job-size distribution (see \cite{F1991} and \cite{DGJ2011}).

%%%%%%%%%%%%%%%%%%%%%%%%%%%%%%%%%%
\subsection{Organization of the paper}\label{sec:organization}

Section~\ref{sec:main} presents the main results of the paper, along with an outline of their proofs and a discussion of their implications. Section~\ref{sec:proof} provides some auxiliary results on the autocorrelation function of the workload and the necessary proofs for the propositions leading up to the main results. In Section~\ref{sec:lambda}, a modified estimator is constructed for the case of an unknown arrival rate. Finally, Section~\ref{sec:conclusion} provides concluding remarks on possible extensions and modifications of this work.

%%%%%%%%%%%%%%%%%%%%%%%%%%%%%%%%%%
\section{Main results}\label{sec:main}

Recall that the risk of an estimator $\widehat{G}$ was defined in \eqref{eq:risk} as the worst-case MSE for a given class of job-size distributions $\mathcal{G}$,
\begin{align*}
\mathcal{R}_{\mathcal{G}}(\widehat{G};x) =\sup_{G\in\mathcal{G}}R(\widehat{G},G;x)\ 
\end{align*}
where
\begin{align*}
 R(\widehat{G},G;x)=\frac{1}{\pi^2}\E\left[\int_0^h\frac{1}{s} \Im\{(\widehat{\gamma}_n^\epsilon(s)-\gamma(s))\e^{-is x}\}\diff s-\int_h^\infty\frac{1}{s} \Im\{\gamma(s)\e^{-is x}\}\diff s\right]^2\ .
\end{align*}
The MSE can be decomposed into variance and bias terms, with an additional error term that will be negligible in the asymptotic analysis. Specifically,
\begin{align*}
 R_G(\widehat{G}_n^{(h)};x) =\frac{1}{\pi^2}(v_x+b_x^2-e_x)\ ,
\end{align*}
where 
\begin{align*}
v_x &:= \E\left[\int_{0}^h\int_{0}^h\frac{1}{st} \Im\{(\widehat{\gamma}_n^\epsilon(s)-\gamma(s))\e^{-is x}\}\Im\{(\widehat{\gamma}_n^\epsilon(t)-\gamma(t))\e^{-it x}\}\diff s \diff t\right] \ , \\
b_x&:=\int_h^\infty\frac{1}{s} \Im\{\gamma(s)\e^{-is x}\}\diff s\ , \\
e_x &:= 2 b_x\E\left[\int_0^h\frac{1}{s} \Im\{(\widehat{\gamma}_n^\epsilon(s)-\gamma(s))\e^{-is x}\}\diff s \right] \ .
\end{align*}
Standard arguments further yield
\begin{equation}\label{eq:risk_up}
 R_G(\widehat{G}_n^{(h)};x) \leq \frac{2}{\pi^2}(v_x+b_x^2)\ .
\end{equation}

The main result of the paper is providing a uniform upper bound on the rate of convergence of the risk in a class of distributions $\mathcal{G}$ that we define below.
The first step will be to establish the $L^2$-convergence rate of $\widehat{\gamma}_n^\epsilon(s)$ to the true CF $\gamma(s)$, as a function of $s$. This is then used in order to construct an increasing sequence of truncation parameters $(h_n)_{n\geq 1}$ that ensures that both the bias ($b_x$) and variance ($v_x$) terms in the risk converge to zero at the same rate. The rate itself will depend on a smoothness parameter associated with the class $\mathcal{G}$. The high-level outline of the proofs will be provided in Section~\ref{sec:out_proof}. The proofs rely on two propositions pertaining to the convergence rate of the terms that appear in the estimation equation. The proofs of the propositions, along with some necessary lemmas, will be given in Section~\ref{sec:proof}.

%%%
\begin{assumption}\label{assum:G_class}
Let $\mathcal{G}$ denote a class of continuous job-size distributions satisfying the following conditions.
\begin{enumerate}
\item[(a)] There exists a constant $\delta\in(0,\frac{1}{2})$ such that
 \begin{equation}\label{eq:assum_rho}
\lambda\E[B]< 1-\delta \ , \forall G\in\mathcal{G}\ .
 \end{equation}
\item[(b)] There exists a constant $0<M<\infty$ such that
 \begin{equation}\label{eq:assum_moments}
\E[B^3]<M\ , \forall G\in\mathcal{G}\ .
 \end{equation}
\item[(c)] There exist constants $\eta>0$ and $0<C_0<\infty $ such that 
 \begin{equation}\label{eq:assum_smooth}
 \lim_{s\to\infty} |\gamma(s)|s^{\eta}\leq C_0\ , \forall G\in\mathcal{G}\ .
 \end{equation}
\end{enumerate}
\end{assumption}

%%%
\begin{assumption}\label{assum:stationary}
The workload sequence $(V_j)_{j\geq 0}$ is stationary in the strong sense; for any $k\geq 1$,
\begin{align*}
(V_0,\ldots,V_{j})\sim (V_{k},\ldots,V_{k+j})\ , \forall j\geq 1 \ .
\end{align*}
\end{assumption}

Assumption~\ref{assum:stationary} requires that the sampled workload be stationary. This implies two properties that will be used in the proofs of the main theorems. Specifically, all samples share the same marginal distribution:
\begin{align*}
    \E[\e^{is V_j}]& =\E[\e^{is V_k}]\ , \forall j,k\geq 0\ , s\in\mathbb{R} \ .
\end{align*}
Additionally, strong-sense stationary implies wide-sense stationary, i.e., the autocorrelation function is invariant to the sampling index:
\begin{align*}
    \Cov(V_{j},V_{k}) &= \Cov(V_{0},V_{k-j})\ , \forall k\geq j\geq 1\ .
\end{align*}
Since the process is regenerative, it has a stationary version (see \cite{T1992}), which ensures the existence of a probability space consistent with Assumption~\ref{assum:stationary}. From a practical perspective, this can be interpreted as assuming that the sampled system has been operating for a long time—recalling that a regenerative process converges to its stationary distribution.

Assumption~\ref{assum:G_class} imposes three conditions on the job-size distribution: (a) First of all, a stability condition for the supremum in $\mathcal{G}$ of the utilization rates $\rho= \lambda\E[B]$ to be smaller than $1-\delta$, ensuring that the sampled system cannot be too close to critical. (b) That the third moments of job-size distributions in $\mathcal{G}$ are uniformly bounded by a constant $M$, which will be necessary for the mixing rate of the covariance terms associated with the MSE. (c) A smoothness condition that will be used to bound the bias term. Specifically, \eqref{eq:assum_smooth} ensures that $|b_x|\leq C_0 h^{-\eta}/(\eta)$ as $h$ grows. Therefore, the square of the deterministic bias term goes to zero with rate $h^{-2\eta}$ as $h\to\infty$, and this will be balanced with the convergence rate of the variance term. Note that $\lim_{s\to\infty}|\gamma(s)|=0$ for any continuous distribution (see \cite[Prop.~10.1.2]{book_AL2006}), and the smoothness of the distribution, represented by the parameter $\eta$ in our setting, determines the speed of convergence. In the context of estimation error deconvolution, distributions that are ordinary-smooth and super-smooth satisfy this condition (see \cite{F1991,DGJ2011}). Some examples of distributions satisfying Assumption~\ref{assum:G_class}(c) are exponential, Gamma, Normal, log-Normal and mixtures of such distributions. Suppose, for instance, that $G$ is as a mixture of Gamma distributions as detailed in Example~\ref{exmp:gamm_mix}. Then, by \eqref{eq:g_MG} we have that \eqref{eq:assum_smooth} holds for any $\eta \leq \min\{\alpha_1,\ldots,\alpha_d\}$. Note that there are distributions with a faster than polynomial decay rate of $|\gamma(s)|$, such as the absolute value of a normal distribution which has exponential decay. Thus, \eqref{eq:assum_smooth} holds for such distributions for any $\eta>0$. As far as the author is aware, there are no standard continuous distributions for which there exists no $\eta>0$ such that \eqref{eq:assum_smooth} is satisfied, i.e., ones with slower than polynomial decay rate. An example for a continuous distribution that violates this condition is one with an uncountable number of atoms on a finite interval, e.g., a distribution on the Cantor set which is not absolutely continuous. Hence, Assumption~\ref{assum:G_class}(c) rules out such distributions.

%%%
\begin{theorem}\label{thm:gamma_s_rate}
Suppose Assumptions~\ref{assum:G_class}(a),~\ref{assum:G_class}(b) and \ref{assum:stationary} are satisfied. If $\epsilon<\delta/4$ then there exist constants $0<C_1,C_2<\infty$ such that for any $G\in\mathcal{G}$,
\begin{equation}\label{eq:gamma_s_rate}
\frac{C_1\max\{1,s^2\}}{n}\leq \E\left| \widehat{\gamma}^\epsilon(s)-\gamma(s)\right| ^2\leq \frac{C_2\max\{1,s^2\}}{n}\ , \forall s\geq 0, \ n\geq 1 \ .
\end{equation}
\end{theorem}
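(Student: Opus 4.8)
The core of the argument is to analyze the estimation error $\hat\gamma_n^\epsilon(s)-\gamma(s)$ by linearizing the ratio that defines $\hat\gamma_n(s)$, and to control the variance of the numerator and denominator terms via the autocorrelation structure of the sampled workload process. The plan is to work throughout on the event $A_{n,s}:=\{|\tfrac1n\sum_{j=1}^n \e^{isV_j}|>\epsilon\}$, where $\hat\gamma_n^\epsilon$ coincides with $\hat\gamma_n$, and to show that the complementary event contributes negligibly. Write $D_n(s):=\tfrac1n\sum_{j=1}^n\e^{isV_j}$ with $\E D_n(s)=\E\e^{isV}=:d(s)$, and recall from the GPK formula \eqref{eq:GPK_V} that $|d(s)|$ is bounded away from $0$ and $1$ uniformly over $\mathcal{G}$ once Assumption~\ref{assum:G_class}(a) holds (the stability gap $\delta$ forces $|d(s)|\ge c(\delta)>0$, and this is exactly why the condition $\epsilon<\delta/2$ appears — it guarantees $|d(s)|>2\epsilon$, so $A_{n,s}^c$ is a large-deviations event for the fluctuation $D_n(s)-d(s)$).

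\textbf{Step 1: reduce $A_{n,s}^c$.}
On $A_{n,s}^c$ we have $|D_n(s)-d(s)|\ge |d(s)|-\epsilon \ge \epsilon$, so by Markov/Chebyshev $\P(A_{n,s}^c)\le \epsilon^{-2}\E|D_n(s)-d(s)|^2$. The second moment $\E|D_n(s)-d(s)|^2$ is $\tfrac1{n^2}\sum_{j,k}\Cov(\e^{isV_j},\e^{-isV_k})$ (plus conjugates); here I will invoke the auxiliary autocorrelation estimates promised in Section~\ref{sec:proof} — under Assumptions~\ref{assum:G_class}(b) (finite third moment, hence polynomial ergodicity of the M/G/1 workload with a summable mixing sequence) and \ref{assum:stationary}, one gets $\E|D_n(s)-d(s)|^2 \le K(s)/n$ for a constant $K(s)$ that grows at most polynomially in $s$ — in fact one can keep it $O(1)$ in $s$ since $|\e^{isV_j}|\le 1$. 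Since $\hat\gamma_n^\epsilon$ is bounded by a fixed constant on $A_{n,s}^c$ and $|\gamma(s)|\le 1$, the contribution of $A_{n,s}^c$ to $\E|\hat\gamma_n^\epsilon(s)-\gamma(s)|^2$ is $O(1)\cdot\P(A_{n,s}^c)=O(1/n)$, which is absorbed into the stated bound.

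\textbf{Step 2: linearize on $A_{n,s}$ and match the quadratic rate.}
From the displayed form of $\hat\gamma_n(s)$, write the numerator as $\mathcal{N}_n(s)=1\cdot(\text{denominator}) - \text{error terms}$; more usefully, subtract the true identity (the estimating equation that $\varphi(s)$ exactly solves, which is the population version of \eqref{eq:phi_z_estimator}) to obtain
\[
\hat\gamma_n(s)-\gamma(s)=\frac{\Delta_n(s)}{\lambda D_n(s)},
\]
where $\Delta_n(s)$ is a centered sum of the form $-\tfrac{is}{n}\sum_j\big[(\mathbf 1\{V_j=0\}-\e^{isV_j}) - \E(\mathbf 1\{V=0\}-\e^{isV})\big] + \tfrac{\xi}{n}(\e^{isV_n}-\e^{isV_0}) + \gamma(s)\cdot$(centered $D_n$ term), collecting all the pieces. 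The factor $is$ multiplying the main empirical-average term is the source of the $s^2$ scaling: $\E|\Delta_n(s)|^2$ splits into (i) an $s^2$-weighted variance of a bounded centered average of workload indicators and exponentials, which is $s^2\cdot O(1/n)$ by the same autocorrelation lemmas, (ii) the boundary term $\tfrac{\xi}{n}(\e^{isV_n}-\e^{isV_0})$, which is $O(1/n^2)$, and (iii) a cross term handled by Cauchy–Schwarz. On $A_{n,s}$, $|\lambda D_n(s)|^{-1}\le (\lambda\epsilon)^{-1}$, giving the upper bound $\E\big[\mathbf 1_{A_{n,s}}|\hat\gamma_n(s)-\gamma(s)|^2\big]\le C_2\max\{1,s^2\}/n$. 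For the matching lower bound, I would replace $D_n(s)$ by its limit $d(s)$ up to a lower-order correction (the fluctuation of $D_n$ around $d(s)$ is $O_{\P}(n^{-1/2})$ and uncorrelated at leading order with the leading $is$-scaled term after recentering), and then show $\E|\Delta_n(s)|^2 \ge c\,s^2/n$ for $s$ bounded away from the isolated zeros of the leading coefficient — the leading variance coefficient is a continuous, strictly positive function of $s$ bounded below on compacts and bounded below at infinity, uniformly over $\mathcal{G}$ by Assumptions~\ref{assum:G_class}(a)–(b); the $\max\{1,s^2\}$ (rather than $s^2$) on the left of \eqref{eq:gamma_s_rate} absorbs small $s$ where the $is$-factor degenerates.

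\textbf{Main obstacle.}
The hard part is Step 1/Step 2's reliance on \emph{uniform}-in-$\mathcal{G}$ control of the workload autocovariances $\Cov(\e^{isV_j},\e^{isV_k})$ at a rate that is both summable in the lag $|j-k|$ and uniform over the class — this is where Assumption~\ref{assum:G_class}(b) (three moments) and the polynomial ergodicity of the M/G/1 queue enter, and making the mixing constants uniform over $\{G:\lambda\E B<1-\delta,\ \E B^3<M\}$ requires care (drift/Lyapunov function estimates with constants depending only on $\delta$ and $M$). A secondary difficulty is that the denominator $D_n(s)$ and numerator $\Delta_n(s)$ are correlated, so the linearization error must be shown to be genuinely lower order uniformly in $s$; I would handle this by a second-order Taylor expansion of $1/D_n(s)$ around $1/d(s)$ and bounding the remainder via $\E|D_n(s)-d(s)|^4=O(1/n^2)$ (again from the mixing estimates, using boundedness of the summands), which keeps the remainder at $o(1/n)$ for fixed $s$ and at most $O(s^2/n^2)$ uniformly — negligible against the $s^2/n$ main term.
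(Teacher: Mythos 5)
Your proposal follows essentially the same route as the paper's proof: split on the truncation event, control its complement at rate $O(1/n)$ by Chebyshev together with uniformly summable autocovariances of $\e^{is V_j}$ (this is exactly Proposition~\ref{prop:cf_cov_rate}), and on the good event bound the error through the exact ratio representation \eqref{eq:error_Z}, with the denominator bounded below by $\epsilon$ and the centered numerator's second moment shown to be of order $\max\{1,s^2\}/n$ (Proposition~\ref{prop:Z_cov_rate}); your $\Delta_n(s)$ is, after rearrangement, precisely $\frac1n\sum_{j}Z_j(s)$ with $Z_j(s)$ as in \eqref{eq:Zi_s}. You diverge in two inessential places. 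First, for the lower bound the paper avoids any linearization of $1/D_n(s)$ around $1/d(s)$ (and hence any fourth-moment or remainder control): since $|D_n(s)|\leq 1$, one has $\lambda|\hat{\gamma}_n(s)-\gamma(s)|\geq|\frac1n\sum_j Z_j(s)|$ directly, and the matching lower bound in Proposition~\ref{prop:Z_cov_rate} (which rests on $\Var[Z_1(s)]$ being of exact order $\max\{1,s^2\}$, the covariance sum being dominated) finishes the argument; your Taylor-expansion route would also work but buys nothing here, and it would additionally have to account for the event where the estimator is replaced by the constant $(1-\epsilon)(1+i)$, harmless only because that event has probability $O(1/n)$. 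Second, the uniformity over $\mathcal{G}$ of the summed covariances is obtained in the paper not through drift/Lyapunov constants but through closed-form expressions for the Poisson-sampled resolvent sums (Lemmas~\ref{lemma:conv_gV}--\ref{lemma:conv_Vkeis}, built on \cite{MR2021} and the quadratic ergodicity statement in Lemma~\ref{lemma:quad_ergodic}), whose uniform bounds reduce to the control of $|\varphi(s)|/s$ and $|\varphi\apost(s)|/s$ in Lemma~\ref{lemma:phi_bound} plus the moment bound of Assumption~\ref{assum:G_class}(b); this is an explicit-computation substitute for the mixing-constant bookkeeping you flag as the main obstacle, and it is also where the $s$- and $s^2$-weights of the cross terms $B_k,C_k,D_k$ in the numerator are tracked to produce the $\max\{1,s^2\}$ factor.
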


%%%
\begin{theorem}\label{thm:rate}
Suppose Assumptions~\ref{assum:G_class}-\ref{assum:stationary} are satisfied. Let $\epsilon<\delta/4$ and $h_n=n^{\zeta}$, for all $n\geq 1$, where $\zeta=(2(1+\eta))^{-1}$ and $\eta$ is the smoothness parameter in \eqref{eq:assum_smooth}. Then, there exists a constant $0<C_3<\infty$ such that for any $n\geq 1$,
\begin{equation}\label{eq:risk_rate}
\mathcal{R}_{\mathcal{G}}(\widehat{G}_n^{(h_n)};x)=\sup_{G\in\mathcal{G}}\E[\widehat{G}_n^{(h_n)}(x)-G(x)]^2\leq C_3 n^{-\frac{\eta}{1+\eta}}\ , \forall x\geq 0\ .
\end{equation}
\end{theorem}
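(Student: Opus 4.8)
The plan is to bound the risk via the decomposition in \eqref{eq:risk_up}, namely $R_G(\hat G_n^{(h)};x)\le \tfrac{2}{\pi^2}(v_x+b_x^2)$, and to show that with the prescribed choice $h_n=n^{\zeta}$, $\zeta=(2(1+\eta))^{-1}$, both $v_x$ and $b_x^2$ are $O(n^{-\eta/(1+\eta)})$ uniformly over $\mathcal G$ and over $x\ge 0$. First I would dispense with the bias term: by Assumption~\ref{assum:G_class}(c), $|\gamma(s)|\le C_0 s^{-\eta}$ for all sufficiently large $s$ (uniformly in $\mathcal G$), so $|b_x|=\left|\int_h^\infty \tfrac1s\Im\{\gamma(s)\e^{-isx}\}\diff s\right|\le \int_h^\infty C_0 s^{-1-\eta}\diff s = \tfrac{C_0}{\eta}h^{-\eta}$, giving $b_x^2\le (C_0/\eta)^2 h_n^{-2\eta} = (C_0/\eta)^2 n^{-2\eta\zeta} = (C_0/\eta)^2 n^{-\eta/(1+\eta)}$, which is exactly the target rate. (A small technical point: \eqref{eq:assum_smooth} is an asymptotic statement, so I would first absorb the behaviour on a fixed compact $[0,s_0]$ into the constant, using $|\gamma(s)|\le 1$ there.)

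The substance is the variance term $v_x$. Using Cauchy–Schwarz (or Jensen) on the double integral defining $v_x$ together with $|\Im\{z\e^{-isx}\}|\le|z|$, one gets
\[
v_x \;\le\; \left(\int_0^{h_n}\frac{1}{s}\,\Big(\E\big|\hat\gamma_n^\epsilon(s)-\gamma(s)\big|^2\Big)^{1/2}\diff s\right)^{2}.
\]
Here Theorem~\ref{thm:gamma_s_rate} supplies the crucial uniform bound $\E|\hat\gamma_n^\epsilon(s)-\gamma(s)|^2\le C_2\max\{1,s^2\}/n$, so the integrand is at most $s^{-1}\sqrt{C_2\max\{1,s^2\}/n}\le \sqrt{C_2/n}\cdot\max\{s^{-1},1\}$. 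Splitting at $s=1$, the integral $\int_0^{h_n}\max\{s^{-1},1\}\diff s$ would diverge near $0$ because of the $1/s$ weight, so I need the finer fact that near $s=0$ the quantity $\E|\hat\gamma_n^\epsilon(s)-\gamma(s)|^2$ is itself $O(s^2/n)$ (this is the content of the $\max\{1,s^2\}$ being essentially $s^2$ for small $s$ — indeed $\hat\gamma_n^\epsilon(0)=\gamma(0)=1$ by construction, and the estimator is Lipschitz-type near $0$). With that, the integrand near $0$ is $s^{-1}\cdot\sqrt{C_2} s/\sqrt n=\sqrt{C_2/n}$, bounded, so $\int_0^{h_n}(\cdots)\diff s \le \sqrt{C_2/n}\,(1+h_n) = O(h_n/\sqrt n)$ and hence $v_x = O(h_n^2/n) = O(n^{2\zeta-1})$. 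Since $2\zeta-1 = \tfrac{1}{1+\eta}-1 = -\tfrac{\eta}{1+\eta}$, this matches the bias rate, and adding the two bounds gives $\mathcal R_{\mathcal G}(\hat G_n^{(h_n)};x)\le C_3 n^{-\eta/(1+\eta)}$ with $C_3$ depending only on $C_0,C_2,\eta$ — in particular uniform in $x$.

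I expect the main obstacle to be the near-zero behaviour of the CF-estimation error feeding into the $1/s$-weighted integral: a naive application of Theorem~\ref{thm:gamma_s_rate} with the bound $\max\{1,s^2\}$ replaced crudely by $1$ gives a logarithmically divergent integral at the origin, so one must either invoke the sharper $s^2/n$ scaling for small $s$ (arguing that $\hat\gamma_n^\epsilon(s)-\gamma(s)\to 0$ as $s\to 0$ with the right rate, using $\hat\gamma_n^\epsilon(0)=\gamma(0)=1$ and a derivative/Taylor estimate on $\hat\varphi_n$), or handle a shrinking neighbourhood $[0,\kappa_n]$ of $0$ separately. A secondary point requiring care is that all constants in Theorem~\ref{thm:gamma_s_rate} and in the bias bound are uniform over $\mathcal G$ by Assumption~\ref{assum:G_class}, so the supremum over $G\in\mathcal G$ passes through cleanly; likewise the bounds are uniform in $x$ because $x$ enters only through the unimodular factor $\e^{-isx}$. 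Finally I would note that the cross term $e_x$ in the exact MSE decomposition is controlled by $2|b_x|\cdot v_x^{1/2} = O(n^{-\eta/(1+\eta)})$ as well, so working with the upper bound \eqref{eq:risk_up} loses nothing in the rate.
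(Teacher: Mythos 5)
Your proposal follows essentially the same route as the paper's proof: the decomposition \eqref{eq:risk_up}, the bias bound $b_x^2\le (C_0/\eta)^2 h_n^{-2\eta}$ from Assumption~\ref{assum:G_class}(c), the Cauchy--Schwarz bound $v_x = O(h_n^2/n)$ via Theorem~\ref{thm:gamma_s_rate}, and balancing the two with $h_n=n^{1/(2(1+\eta))}$. The small-$s$ issue you flag is genuine but is equally present in the paper's own argument, which in \eqref{eq:vx_rate} silently uses the $s^2/n$ rather than the stated $\max\{1,s^2\}/n$ bound on $(0,1)$; your proposed remedy (a sharper near-origin bound on the CF-estimation error, or a separate treatment of a neighbourhood of $0$) is the right way to make that step fully rigorous.
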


%%%
\begin{remark}\label{rem:eps} The stationary workload distribution has an atom at zero, hence $\lim_{s\to\infty}|\E[\e^{is V}]|=\P(V=0)>0$. Therefore, assuming Assumption~\ref{assum:G_class}(a), we can select a parameter $\epsilon\leq \delta/4<\P(V=0)/4\leq\inf_{s\geq 0}|\E[\e^{is V}]|/2$ for the modified estimator $\widehat{\gamma}_\epsilon$ defined in \eqref{eq:gamma_hat_e} (a fact that will later be verified in Lemma~\ref{lemma:phi_bound}). From a practical perspective $\epsilon$ can be chosen to be arbitrarily small, and there is a simple empirical test to verify this choice, namely $\epsilon<\frac{1}{4n}\sum_{j=1}^n\mathbf{1}\{V_j=0\}$.
\end{remark} 

%%%
\begin{remark}\label{rem:stationary} The bounds in Theorems~\ref{thm:gamma_s_rate} and~\ref{thm:rate} are not a function of the sampling rate $\xi$. Considering $\xi\to 0$ implies that the same convergence rates hold for an iid sample of stationary workload observations. Note that in this case the charechteristic exponent estimator given in \eqref{eq:phi_z_estimator} is
\begin{align*}
\widehat{\varphi}_n(s)=\frac{-\frac{is}{n}\sum_{j=1}^n\mathbf{1}\{V_j=0\}}{\frac{1}{n}\sum_{j=1}^n \e^{is V_{j}}}\ ,
\end{align*}
which is simply the solution of equating the empirical workload CF with the empirical GPK formula \eqref{eq:GPK_V}.  
\end{remark} 

%%%%%%%%%%%%%%%%%%%%%%%%%%%%%%%%%%%
\subsection{On the risk convergence rate}\label{sec:examples}

Theorem~\ref{thm:rate} provides an upper bound on the risk for distributions satisfying Assumption~\ref{assum:G_class}. The convergence rate is determined by the smoothness parameter $\eta$ in \eqref{eq:assum_smooth};
\begin{align*}
\mathcal{R}_{\mathcal{G}}(\widehat{G}_n^{(h_n)};x)\leq C_3 n^{-\frac{\eta}{1+\eta}}= C_3 n^{-\frac{1}{1+1/\eta}} \ .
\end{align*}
If $\eta$ is very high, i.e. the distribution is `very smooth', then the convergence rate is almost parametric ($1/n$). In general, the value $1/\eta$ can be thought of as a penalty to the parametric rate that is higher as the distribution is `less smooth'. It is interesting to point out that very similar convergence rates, i.e., polynomial that are close to parametric when $\eta$ is low, are shown to be rate-optimal in the setting of kernel density estimation with additive observation errors that satisfy condition \eqref{eq:assum_smooth} (e.g., \cite[Thm.~2]{F1991}). Of course, in practice $\eta$ is not known and setting a specific value for $h_n=n^{1/(2(1+\eta))}$ represents some prior belief on the smoothness of the possible distributions. In particular, choosing a low value of $\eta$ can be considered conservative. In the next examples we illustrate this interplay for specific distributions. The examples highlight the fact that as the underlying smoothness of $G$, indexed by the parameter $\eta$, increases, the estimation accuracy improves and a lower truncation parameter $h$ is required.  The following simulation experiments suggest that selecting a truncation parameter that yields a smoother estimated CDF is a reasonable heuristic for choosing $h$ when $\eta$ is unknown.

\setcounter{example}{0}
%%%
\begin{example}\label{exmp:gamm_mix}\textit{ (Continued) Mixture of Gamma distributions:}
In Section~\ref{sec:estim} the estimator was applied with different choices of $h$ to a mixture of Gamma distributions with $\bm{\alpha}=(2,6,1)$, $\bm{\beta}=(3.5,90,0.09)$ and $\bm{p}=(0.6,0.35,0.05)$. Theorem~\ref{thm:rate} prescribes selecting the minimal shape parameter $\eta=\min\bm\alpha=1$, yielding $\zeta=1/4$ and $h_n\approx 10$. In Figure~\ref{fig:Gh} the estimated functions are plotted for a sample size of $n=10^4$. It was observed in the example that the best fit appears to be in the range of $[5,10]$. Keep in mind that the rates in Theorem~\ref{thm:rate} are valid up to scaling by constants so $h_n\approx 10$ is certainly in line with the empirical observations for this example. The MSE upper bound \eqref{eq:risk_rate} for $n=10^4$ is of of the order $n^{-1/2}=0.01$ for $h_n=10$. Of course, these bounds are scaled by constants which may still be significant for this sample size.

In practice, the underlying distribution and $\eta$ itself are unknown, so the above is not a piratical recipe for choosing an appropriate $h$. A useful observation is that that the estimated function with $h=10$ displays much more moderate fluctuations compared to all other choices, thus suggesting that this is a reasonable heuristic criteria for the choice of $h$. The same behavior is observed in the following two examples, further strengthening the usefulness of this heuristic.
\hfill $\diamond$
\end{example}

%%%
\begin{example}\textit{Log-normal distribution:}\label{exmp:log-normal} 
Suppose now that the job-sizes follow a log-Normal distribution with parameters $(\mu,\sigma)$; $G(x)=\Phi((\log x-\mu)/\sigma)$. The characteristic function does not admit a closed form expression but via approximations it can be shown that it decays in absolute value at a polynomial rate with $\eta>2$ (see \cite{AJR2016}). For the case of $\lambda=0.6$ $\mu=0.2$, and $\sigma=0.5$ the estimator was applied for different $h$ for a sample of $n=10^4$ workload observations. Hence, by Theorem~\ref{thm:rate} we expect that $h_n=n^{1/6}\approx 4.6$ will yield a good fit. The results are illustrated in Figure~\ref{fig:Gh_ln}. Indeed, the best fit is achieved with $h=4$, while increasing the truncation threshold yields a noisier estimator. 

Figure~\ref{fig:Gh_ln2a} displays the same experiment for data generated with a lower arrival rate of $\lambda=0.3$ (half the utilization rate). This reduces the variance of the estimators and yields CDF estimators with less fluctuations in all cases. As before, the truncation parameter of $h=4$ appears to yield the best fit, although now the difference between the estimated functions is smaller due to the reduced variance. A similar experiment is presented in Figure~\ref{fig:Gh_ln2b}, but this time the sampling rate is slower ($\xi=0.2$ instead of $\xi=1$). Again, the optimal choice of truncation is obtained at $h=4$ with slightly lower variance in all cases. The reason for the reduced variance is that the correlation between workload observations is weaker when sampling occurs less frequently. See \cite{RBM2019} for a more detailed analysis of the effect of the sampling rate on the variance of estimation errors. Note that a slower sampling rate comes at a cost of a longer sampling duration for the same number of observations.
\hfill $\diamond$
\end{example}

%%%
\begin{figure}[h]
\centering
\includegraphics[width=0.8\textwidth,height=9cm]{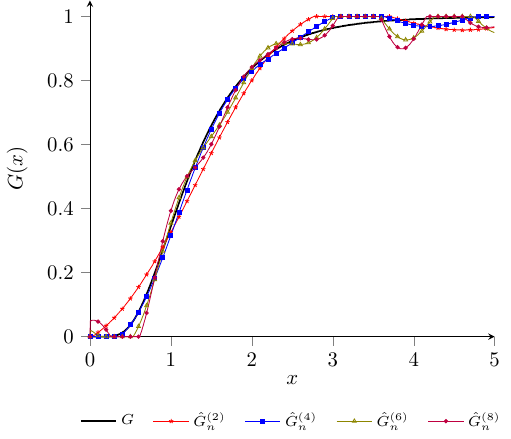}
\caption{The estimators are based on a simulation of $n=10^4$ workload observations with sampling rate $\xi=1$. The arrival rate is $\lambda=0.6$ and the true CDF $G$ (black solid line) is a log-Normal distribution with $\mu=0.2$ and $\sigma=0.5$. The estimated functions $\widehat{G}_n^{(h)}$ are plotted for $h\in\{2,4,6,8\}$.} \label{fig:Gh_ln}
\end{figure}

 %%%
\begin{figure}[h]
\centering
\begin{subfigure}[b]{0.48\textwidth}
\includegraphics[width=\textwidth,height=6cm]{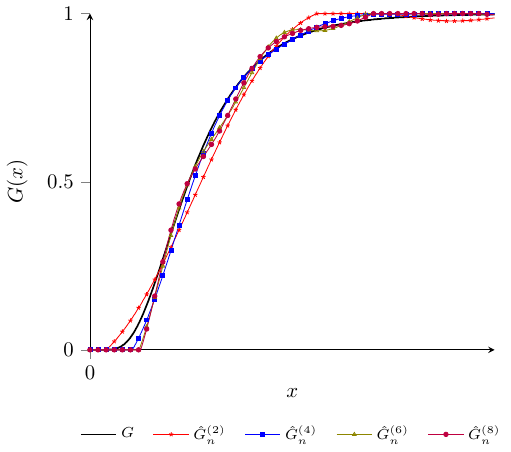}
\caption{$\lambda=0.3$, $\xi=1$.}
  \label{fig:Gh_ln2a}
\end{subfigure}
\begin{subfigure}[b]{0.48\textwidth}
\includegraphics[width=\textwidth,height=6cm]
{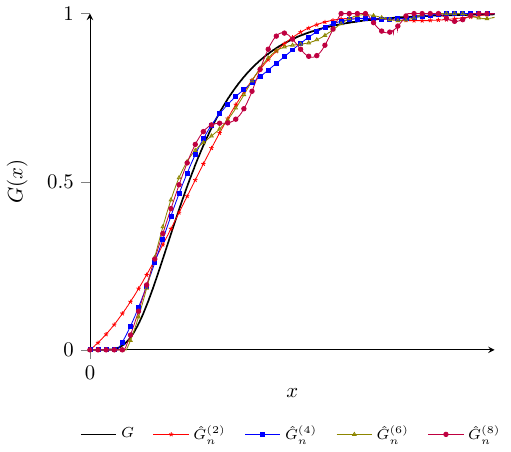}
\caption{$\lambda=0.6$, $\xi=0.2$.}
  \label{fig:Gh_ln2b}
\end{subfigure}
\caption{The estimators are based on a simulation of $n=10^4$ workload observations. The plots illustrate the impact of changing the utilization rate and sampling rate. The true CDF $G$ (black solid line) is a log-Normal distribution with $\mu=0.2$ and $\sigma=0.5$. The estimated functions $\widehat{G}_n^{(h)}$ are plotted for $h\in\{2,4,6,8\}$..} \label{fig:Gh_ln2}
\end{figure}

%%%
\begin{example}\textit{Truncated-normal distribution:}\label{exmp:normal} 
We next consider a system with jobs sizes distributed as the absolute value of normal random variables with parameters $(\mu,\sigma)$; $B=|\mathcal{N}(\mu,\sigma^2)|$. The characteristic function of $B$ decays exponentially fast in absolute value. Therefore, condition \eqref{eq:assum_smooth} is satisfied for any $\eta>0$. This means that $h_n=n^\zeta$ can be chosen with a very small $\zeta>0$ and the convergence rate will be close to $(1/n)$. In Figure~\ref{fig:Gh_norm} the estimators are plotted for different values of $h$ for a sample of $n=10^4$ workload observations.  A choice of $h\in[1,2]$ appears to yield the best fit in this case. 
\hfill $\diamond$
\end{example}

%%%
\begin{figure}[h]
\centering
\includegraphics[width=0.8\textwidth]{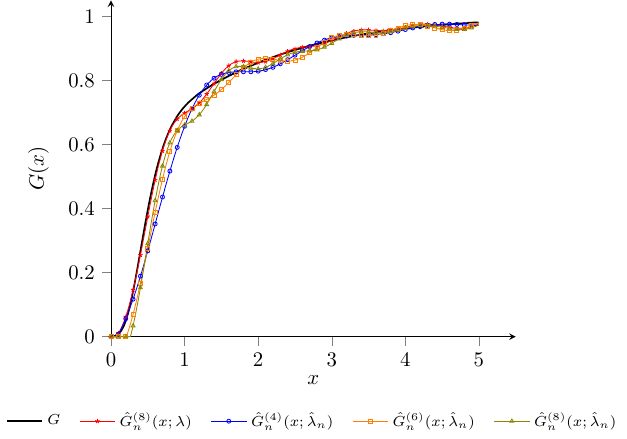}

\caption{The estimators are based on a simulation of $n=10^4$ workload observations with sampling rate $\xi=1$. The arrival rate is $\lambda=0.6$ and the true CDF $G$ (black solid line) is a truncated-normal distribution with $\mu=0.5$ and $\sigma=0.1$. The estimated functions $\widehat{G}_n^{(h)}$ are plotted for $h\in\{1,2,4,6\}$.} \label{fig:Gh_norm}
\end{figure}

\pagebreak

%%%%%%%%%%%%%%%%%%%%%%%%%%%%%%%%%%%
\subsection{Outline of proofs}\label{sec:out_proof}

For any $s\geq 0 $, let
\begin{equation}\label{eq:Zi_s}
 Z_j(s)=(\xi-\varphi(s))\e^{is V_{j}}-\xi\e^{is V_{j-1}}-is\mathbf{1}\{V_j=0\}\ .
\end{equation} 
Rearranging the terms in the estimation equation \eqref{eq:phi_z_estimator} yields
\begin{equation}\label{eq:error_Z}
\lambda(\widehat{\gamma}_n(s)-\gamma(s))=\widehat{\varphi}_n(s)-\varphi(s)=\frac{\sum_{j=1}^nZ_j(s)}{\sum_{j=1}^n\e^{is V_j}} \ ,
\end{equation}
whenever $|\frac{1}{n}\sum_{j=1}^n\e^{is V_j}|\geq \epsilon$. This representation of the estimation error is central in the proof of Theorem~\ref{thm:gamma_s_rate}. The advantage lies in the fact the numerator is a martingale difference series and the denominator is the empirical characteristic function of the workload process.  Before presenting the proofs of the theorems, the following propositions establish the $L^2$-convergence rates of both the series in \eqref{eq:error_Z}. The proofs of the propositions, along with a some necessary lemmas that lead up to them, are given in Section~\ref{sec:proof}. The proof of Theorem~\ref{thm:rate} then follows by applying Theorem~\ref{thm:gamma_s_rate} and standard inequalities.

%%%
\begin{proposition}\label{prop:cf_cov_rate}
Assuming \ref{assum:G_class} and \ref{assum:stationary}, for any $s\geq 0$ and $\epsilon<\delta/4$ there exists a positive and finite constant $D_1$ such that
\begin{equation}\label{eq:CF_norm_bound}
\sup_{G\in\mathcal{G}}\P\left(\left|\frac{1}{n}\sum_{j=1}^n\e^{is V_j}\right|\leq \epsilon\right)\leq \frac{D_1}{n}\ .
\end{equation}
\end{proposition}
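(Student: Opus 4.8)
The plan is to bound the probability that the empirical characteristic function $\frac{1}{n}\sum_{j=1}^n \e^{isV_j}$ falls below $\epsilon$ by a Chebyshev-type argument, using the fact that under Assumption~\ref{assum:stationary} the mean of each term equals the stationary CF $\E\e^{isV}$, whose absolute value stays strictly above $2\epsilon$ by Remark~\ref{rem:eps}. Concretely, write $\bar{\chi}_n(s) := \frac{1}{n}\sum_{j=1}^n \e^{isV_j}$ and $m(s) := \E\e^{isV}$. On the event $\{|\bar{\chi}_n(s)|\leq\epsilon\}$ we have $|\bar{\chi}_n(s)-m(s)|\geq |m(s)|-\epsilon \geq \epsilon$ (since $|m(s)| \geq \P(V=0) > 2\epsilon$ uniformly over $\mathcal{G}$ by Assumption~\ref{assum:G_class}(a) and Remark~\ref{rem:eps}). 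Hence by Markov's inequality applied to $|\bar{\chi}_n(s)-m(s)|^2$,
\begin{align*}
\P\left(|\bar{\chi}_n(s)|\leq\epsilon\right)\leq \P\left(|\bar{\chi}_n(s)-m(s)|^2\geq \epsilon^2\right)\leq \frac{1}{\epsilon^2}\,\E\left|\bar{\chi}_n(s)-m(s)\right|^2\ .
\end{align*}
It therefore suffices to show $\E|\bar{\chi}_n(s)-m(s)|^2 \leq D/n$ with $D$ uniform over $G\in\mathcal{G}$ and over $s\geq 0$.

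The second moment expands as $\frac{1}{n^2}\sum_{j,k=1}^n \Cov(\e^{isV_j}, \overline{\e^{isV_k}})$, which by stationarity reduces to $\frac{1}{n} r_0(s) + \frac{2}{n^2}\sum_{1\leq j<k\leq n}(\text{real part of lag-}(k-j)\text{ autocovariance})$. The diagonal term is bounded by $1/n$ since $|\e^{isV_j}|=1$. The off-diagonal terms are governed by the autocovariance of the sampled workload process at lag $\ell$, namely $\Cov(\e^{isV_0},\overline{\e^{isV_\ell}})$. To get an $O(1/n)$ bound on the whole double sum I need these lag-$\ell$ covariances to be summable in $\ell$, uniformly in $s$ and $G$. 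This is exactly where Assumption~\ref{assum:G_class}(b) (the third-moment bound) enters: it guarantees polynomial ergodicity of the M/G/1 workload at a rate fast enough that $\sum_{\ell\geq 1}|\Cov(\e^{isV_0},\overline{\e^{isV_\ell}})|\leq K<\infty$ with $K$ independent of $s$ (the bound is uniform in $s$ because $|\e^{isx}|\leq 1$, so the covariance is controlled by a coupling/total-variation distance between the conditional law of $V_\ell$ given $V_0$ and the stationary law, not by $s$). Given such summability, $\frac{2}{n^2}\sum_{j<k}|\cdots| \leq \frac{2}{n^2}\cdot n\cdot K = \frac{2K}{n}$, and combining gives $\E|\bar{\chi}_n(s)-m(s)|^2 \leq (1+2K)/n$, so $D_1 = (1+2K)/\epsilon^2$ works.

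The main obstacle is establishing the uniform (in $\mathcal{G}$) summability of the autocovariances, i.e. showing that the sampled workload chain is $\beta$- or $\rho$-mixing with a summable rate and that the rate can be taken uniform over the class $\mathcal{G}$ defined by Assumption~\ref{assum:G_class}(a)--(b). I expect this to be the content of the auxiliary lemmas promised in Section~\ref{sec:proof} (the ``auxiliary results on the autocorrelation function of the workload sampled at Poisson epochs''): one shows via Foster--Lyapunov / drift arguments, or via the explicit Pollaczek--Khinchine representation and known polynomial-ergodicity results for M/G/1 under finite $(2+\epsilon)$-moments (cf.\ \cite{TT1994}), that the $\ell$-step convergence to stationarity decays like $\ell^{-(1+c)}$ for some $c>0$, with constants depending only on $\delta$ and $M$. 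Everything else—the Markov inequality step, the lower bound $|m(s)|>2\epsilon$, the expansion of the variance—is routine. I would present the covariance-summability as a cited lemma from Section~\ref{sec:proof} and keep the proof of the proposition itself to the three displayed steps above.
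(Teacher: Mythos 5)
Your proposal follows the same skeleton as the paper's proof: the event inclusion $\{|\frac{1}{n}\sum_j \e^{isV_j}|\leq\epsilon\}\subseteq\{|\frac{1}{n}\sum_j \e^{isV_j}-\E\e^{isV}|\geq\epsilon\}$ (using $\epsilon<\delta/2<\inf_s|\E\e^{isV}|/2$, exactly as in Remark~\ref{rem:eps}), then Chebyshev, then the stationarity expansion of the variance into a lag-covariance sum, and finally an $O(1/n)$ bound from summability of the autocovariances, uniform in $s$ and $G$. The only substantive difference is how that last ingredient is justified. You treat it as a mixing statement: the covariance at lag $\ell$ is dominated by the total-variation distance between $\mathcal{L}(V_\ell\mid V_0)$ and the stationary law (correctly noting this is why the bound is $s$-free), with summability from polynomial ergodicity under the third-moment condition and uniformity over $\mathcal{G}$ claimed via drift constants depending only on $\delta$ and $M$. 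The paper instead computes the covariance series in closed form (Lemmas~\ref{lemma:quad_ergodic}--\ref{lemma:conv_kV}), using the conditional CF of the Poisson-sampled workload and the identity from \cite{MR2021} to obtain $\sum_k\bigl(\E[\e^{-is(V_0+V_k)}]-\E^2[\e^{-isV}]\bigr)=\xi\kappa_1(s;G)$, and derives the uniform bound over $s\geq0$ and $G\in\mathcal{G}$ from this explicit expression together with Lemma~\ref{lemma:phi_bound} and the moment bounds; notably the paper's own ergodicity lemma is stated for a fixed $G$, and the $\mathcal{G}$-uniformity enters only through the closed form, not through uniform drift constants. So your proposition-level argument is correct and coincides with the paper's, but the covariance-summability step you defer to an assumed lemma is precisely the nontrivial content of Section~\ref{sec:corr}; your sketched route to it (uniform-in-$\mathcal{G}$ polynomial ergodicity) is plausible but asserted rather than proved, and would require genuinely more work than the citation suggests.
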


%%%
\begin{proposition}\label{prop:Z_cov_rate}
Assuming \ref{assum:G_class} and \ref{assum:stationary}, there exist positive and finite constants $D_2,D_3$ such that for any $G\in\mathcal{G}$, 
\begin{equation}\label{eq:Z_nprm_bound}
\frac{D_2 \max\{1,s^2\}}{n}\leq \E|\frac{1}{n}\sum_{j=1}^nZ_j(s)|^2 \leq \frac{D_3 \max\{1,s^2\}}{n} \ , \forall s\geq 0, \ n\geq 1 \ .
\end{equation}
\end{proposition}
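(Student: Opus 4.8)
\textbf{Proof proposal for Proposition~\ref{prop:Z_cov_rate}.}
The plan is to exploit the martingale hidden in the estimating equation: I will show that $(Z_j(s))_{j\ge1}$ is a bounded complex-valued martingale difference sequence for the natural filtration of the sampled workload, so that the off-diagonal terms in $\E|\sum_{j=1}^n Z_j(s)|^2$ vanish, reduce the remaining diagonal sum to the single quantity $\E|Z_1(s)|^2$ by stationarity, and finally bound $\E|Z_1(s)|^2$ above and below by a multiple of $\max\{1,s^2\}$.

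The first step is the martingale identity behind \eqref{eq:phi_z_estimator}. Set $\mathcal{F}_j:=\sigma(V_0,\dots,V_j)$. The sampled chain is Markov and $V_j=V(T_j)$ with the sampling gaps $T_j-T_{j-1}\sim\mathrm{Exp}(\xi)$ independent of the path of $V$. The workload is the net-input process reflected at zero, for which $\e^{isV(t)}-\e^{isV(0)}-\varphi(s)\int_0^t\e^{isV(u)}\diff u-is\,L(t)$ is a martingale, where $L$ is the regulator (cumulative idle-time) process with $\diff L(t)=\mathbf{1}\{V(t)=0\}\diff t$; this is the complex-exponential analogue of the identity of \cite{KBM2006} used in \cite{RBM2019}. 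Taking conditional expectations at the independent exponential gap and using $\E[\int_0^T f(V(u))\diff u\mid\mathcal{F}_{j-1}]=\xi^{-1}\E[f(V_j)\mid\mathcal{F}_{j-1}]$ for $T\sim\mathrm{Exp}(\xi)$, then rearranging, gives
\begin{align*}
(\xi-\varphi(s))\,\E[\e^{isV_j}\mid\mathcal{F}_{j-1}]-\xi\,\e^{isV_{j-1}}-is\,\P(V_j=0\mid\mathcal{F}_{j-1})=0,
\end{align*}
i.e.\ $\E[Z_j(s)\mid\mathcal{F}_{j-1}]=0$. Since $|\e^{isV_j}|\le1$ and $|\varphi(s)|=|\lambda(\gamma(s)-1)-is|\le 2\lambda+s$, we have $|Z_j(s)|\le2(\xi+\lambda+s)$, so the increments are bounded and the martingale-difference property is genuine. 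I expect this transfer of the Kella-type identity from the real Laplace exponent of \cite{RBM2019} to the complex domain — verifying integrability and the interchange of expectation with the exponential-time integral — to be the main technical obstacle, although it is bookkeeping rather than anything deep.

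Granting the martingale-difference property, complex $L^2$-orthogonality gives $\E|\sum_{j=1}^n Z_j(s)|^2=\sum_{j=1}^n\E|Z_j(s)|^2$, since for $j<k$ the variable $Z_j(s)$ is $\mathcal{F}_{k-1}$-measurable and $\E[\overline{Z_k(s)}\mid\mathcal{F}_{k-1}]=0$ (and symmetrically for $j>k$). By Assumption~\ref{assum:stationary} the pair $(V_{j-1},V_j)$ has the same law for every $j\ge1$, hence $\E|Z_j(s)|^2=\E|Z_1(s)|^2$ and
\begin{align*}
\E\Bigl|\tfrac1n\textstyle\sum_{j=1}^nZ_j(s)\Bigr|^2=\tfrac1n\,\E|Z_1(s)|^2.
\end{align*}
The upper bound is then immediate from $|Z_1(s)|\le2(\xi+\lambda+s)$, which yields $\E|Z_1(s)|^2\le4(\xi+\lambda+s)^2\le D_3\max\{1,s^2\}$ with $D_3$ depending only on $\lambda$ and $\xi$; this is uniform over $\mathcal{G}$ because the estimate for $|\varphi(s)|$ uses only $|\gamma(s)|\le1$.

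For the lower bound it remains to show $\E|Z_1(s)|^2\ge D_2\max\{1,s^2\}$. I would condition on $V_0$: since $\E[Z_1(s)\mid V_0]=0$, $\E|Z_1(s)|^2=\E[\Var(W\mid V_0)]$ with $W:=(\xi-\varphi(s))\e^{isV_1}-is\mathbf{1}\{V_1=0\}$ (the $\xi\e^{isV_0}$ term is $V_0$-measurable). Using $\xi-\varphi(s)=\xi+\lambda(1-\gamma(s))+is$, one checks that $W$ equals the bounded quantity $\xi+\lambda(1-\gamma(s))$ on $\{V_1=0\}$ and equals $(\xi+\lambda(1-\gamma(s))+is)\e^{isV_1}$, of modulus comparable to $s$ for large $s$, on $\{V_1>0\}$; this two-regime spread forces $\Var(W\mid V_0)\gtrsim s^2\,\P(V_1>0\mid V_0)\,\P(V_1=0\mid V_0)$, and taking expectations gives a lower bound of order $s^2$ for $s$ bounded below, while for small $s$ a Taylor expansion of $Z_1(s)$ in $s$ shows $\E|Z_1(s)|^2\asymp s^2$ as well. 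The delicate point is uniformity over $\mathcal{G}$ of the constant, which hinges on a uniform lower bound for $\E[\P(V_1>0\mid V_0)\P(V_1=0\mid V_0)]$; I would obtain this from Assumption~\ref{assum:G_class}(a), splitting off if necessary a light-traffic subclass of $\mathcal{G}$ where the lower bound is instead produced from the $\lambda(1-\gamma(s))-\xi(\e^{isV_0}-1)$ contribution.
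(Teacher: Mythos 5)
Your argument for the off-diagonal terms takes a genuinely different route from the paper. The paper keeps the full covariance expansion $\E|\frac{1}{n}\sum_{j}Z_j(s)|^2=\frac{1}{n}\bigl(\Var[Z_1(s)]+2\sum_{k\ge2}\frac{n-k}{n}\E[Z_1(s)\overline{Z_k(s)}]\bigr)$, writes $Z_1(s)\overline{Z_k(s)}$ as four blocks $A_k,B_k,C_k,D_k$, and controls the summed covariances through the Poisson-sampling ergodicity lemmas (Lemmas~\ref{lemma:conv_kV}, \ref{lemma:conv_Vk0=0}, \ref{lemma:conv_V0eis}, \ref{lemma:conv_Vkeis}), with uniformity over $\mathcal{G}$ supplied by Lemma~\ref{lemma:phi_bound}. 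You instead use that $(V_j)$ is Markov under Poisson sampling and that \eqref{eq:lapVT} gives $\E[Z_k(s)\mid\mathcal{F}_{k-1}]=0$, so the cross terms vanish exactly and everything reduces to $\E|Z_1(s)|^2$. That martingale-difference property is correct (the paper itself notes it after \eqref{eq:error_Z}, and it follows in one line from \eqref{eq:lapVT}; you do not need to rederive the Kella--Whitt-type identity), and your bound $\E|Z_1(s)|^2\le 4(\xi+\lambda+s)^2$ uses only $|\gamma(s)|\le1$, hence is uniform in $\mathcal{G}$. For the upper half of \eqref{eq:Z_nprm_bound} --- the half actually used in Theorem~\ref{thm:rate} --- your proof is correct and considerably shorter, bypassing the covariance lemmas entirely for this proposition (they remain needed for Proposition~\ref{prop:cf_cov_rate}, where no martingale structure is available).

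The lower-bound half is where your proposal has a real gap. First, your own small-$s$ analysis gives $\E|Z_1(s)|^2\asymp s^2$, which is strictly weaker than $D_2\max\{1,s^2\}$ on $(0,1)$; indeed $Z_j(0)\equiv0$, so no bound of the form $D_2/n$ can hold near $s=0$, and you cannot paper over this with ``as well'' --- the paper's own proof likewise only asserts a quadratic lower bound for $s\ge1$, so at best you should state explicitly that the lower bound is being established away from the origin. Second, and more substantively, your large-$s$ argument reduces to a uniform-in-$\mathcal{G}$ lower bound on quantities such as $\E[\P(V_1>0\mid V_0)\P(V_1=0\mid V_0)]$, i.e.\ essentially on $\rho=\P(V>0)$. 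Assumption~\ref{assum:G_class} bounds $\rho$ away from $1$, not away from $0$, and the fallback you sketch (extracting the lower bound from the $\lambda(1-\gamma(s))$ contribution on a light-traffic subclass) does not obviously work: for job sizes concentrated near zero both $\rho$ and $\lambda|1-\gamma(s)|$ are small over a wide range of $s$, so no uniform constant emerges from that term either. As written, then, the lower bound is an unfinished sketch with a uniformity issue you yourself identify but do not resolve; since the upper bound is the operative part of the proposition, this does not undermine your main contribution, but it should not be presented as a complete proof of \eqref{eq:Z_nprm_bound} as stated.
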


%%%
\begin{proof}[Proof of Theorem~\ref{thm:gamma_s_rate}]
By \eqref{eq:phi_LST} and \eqref{eq:error_Z}, the estimation error of the estimator without truncation is
 \begin{align*}
\lambda(\widehat{\gamma}_n(s)-\gamma(s))=\widehat{\varphi}_n(s)-\varphi(s)=\frac{\frac{1}{n}\sum_{j=1}^nZ_j(s)}{\frac{1}{n}\sum_{j=1}^n\e^{is V_j}} \ .
 \end{align*}
Let $A_\epsilon=\left\lbrace\left|\frac{1}{n}\sum_{j=1}^n\e^{is V_j}\right|> \epsilon\right\rbrace$, then
\begin{align*}
\E\left| \widehat{\gamma}_n^\epsilon(s)-\gamma(s)\right|^2 &= \E\left|(\widehat{\gamma}_n(s)-\gamma(s))\mathbf{1}\{A_\epsilon\}\right|^2+\E\left| (\widehat{\gamma}_n^\epsilon(s)-\gamma(s))\mathbf{1}\{\overline{A_\epsilon}\}\right|^2 \\
&\leq \E\left| \frac{\frac{1}{n}\sum_{j=1}^nZ_j(s)}{\lambda\epsilon}\right|^2+4\left(1-\epsilon\right)^2\P\left(\left|\frac{1}{n}\sum_{j=1}^n\e^{is V_j}\right|\leq \epsilon\right) \  .
\end{align*}
The upper bound in \eqref{eq:gamma_s_rate} then follows by applying \eqref{eq:CF_norm_bound} and \eqref{eq:Z_nprm_bound} in Propositions~\ref{prop:cf_cov_rate} and~\ref{prop:Z_cov_rate}.

For the lower bound in \eqref{eq:gamma_s_rate} observe that $|\frac{1}{n}\sum_{j=1}^n\e^{is V_j}|\leq 1$, hence
\begin{align*}
\E| \widehat{\gamma}_n^\epsilon(s)-\gamma(s)|^2\geq \frac{1}{\lambda^2}\E| \frac{1}{n}\sum_{j=1}^nZ_j(s)|^2 \ , 
\end{align*}
and the result follows from the lower bound in \eqref{eq:Z_nprm_bound}.
\end{proof}

%%%
\begin{proof}[Proof of Theorem~\ref{thm:rate}]
Let $(h_n)_{n\geq 1}$ be a sequence of truncation parameters such that $h_n\to\infty$ as $n\to\infty$. We first bound the convergence rates for the two terms of the risk in \eqref{eq:risk_up}.
\begin{enumerate}
\item \textit{Bound for $v_x$:} If $|\frac{1}{n}\sum_{j=1}^n\e^{is V_j}|< \epsilon$ then $|\widehat{\gamma}^{(\epsilon)}(s)-\gamma(s)|\leq 1+\sqrt{2}$ for any $s\geq 0$ by the estimator construction in \eqref{eq:gamma_hat_e}. By \eqref{eq:Zi_s} and \eqref{eq:error_Z}, if $|\frac{1}{n}\sum_{j=1}^n\e^{is V_j}|\geq \epsilon$ then 
\begin{align*}
\lambda|\widehat{\gamma}^{(\epsilon)}(s)-\gamma(s)|=\frac{|\frac{1}{n}\sum_{j=1}^nZ_j(s)|}{|\frac{1}{n}\sum_{j=1}^n\e^{is V_j}|} \leq \frac{\frac{1}{n}\sum_{j=1}^n|Z_j(s)|}{\epsilon} \ .
\end{align*}
By the triangle inequality, $|Z_j(s)|\leq |\varphi(s)|+s+2\xi$, hence there exist constants $s_0,c_0>0$ such that $|Z_j(s)|\leq c_0(|\varphi(s)|+s)$ for all $s\geq s_0$. Furthermore, as $|Z_j(s)|$ is continuously differentiable and $Z_j(0)=0$, we have that 
\begin{align*}
    |Z_j(s)|\leq K_0(|\varphi(s)|+s)\ , \forall s\geq 0
\end{align*}
where $K_0=\max\left\lbrace c_0,\sup_{s\in[0,s_0]}\frac{\diff}{\diff s}|Z_j(s)|\right\rbrace<\infty$. Therefore,
\begin{align*}
\lambda|\widehat{\gamma}^{(\epsilon)}(s)-\gamma(s)| \leq \frac{K_0(|\varphi(s)|+s)}{\epsilon} \ ,
\end{align*}
Let $K_1=K_0/(\lambda\epsilon)$. Applying the triangle inequality once more yields
\begin{align*}
  |\Im\{(\widehat{\gamma}_n^\epsilon(s)-\gamma(s))\e^{-is x}\}| \leq |(\widehat{\gamma}_n^\epsilon(s)-\gamma(s))\e^{-is x}|
=    |\widehat{\gamma}_n^\epsilon(s)-\gamma(s)| \leq K_1(|\varphi(s)|+s)\ .
\end{align*}
Therefore, for any $h>0$,
\begin{equation}\label{eq:fubini_bound}
\int_{0}^h|\frac{1}{s} \Im\{(\widehat{\gamma}_n^\epsilon(s)-\gamma(s))\e^{-is x}\}|\diff s \leq \int_{0}^h\frac{1}{s}K_1(|\varphi(s)|+s)\diff s <\infty \ .
\end{equation}
The finiteness of the last integral is verified by observing that
\begin{align*}
 \lim_{s\downarrow 0} \frac{|\varphi(s)|}{s}=|\lim_{s\downarrow 0} \frac{\varphi(s)}{s}|=|\varphi'(0)|<\infty \ .
\end{align*}

Utilizing \eqref{eq:fubini_bound}, by Fubini's theorem,
\begin{align*}
v_x =\int_{0}^{h_n}\int_{0}^{h_n}\frac{1}{st}\E\left[ \Im\{(\widehat{\gamma}_n^\epsilon(s)-\gamma(s))\e^{-is x}\}\Im\{(\widehat{\gamma}_n^\epsilon(t)-\gamma(t))\e^{-it x}\}\right] \diff s \diff t \ ,
\end{align*}
and applying the Cauchy-Schwartz inequality and Theorem~\ref{thm:gamma_s_rate} yields
\begin{equation}\label{eq:vx_rate}
v_x\leq \left( \int_{0}^{h_n}\frac{1}{s}\sqrt{\E|\widehat{\gamma}_n^\epsilon(s)-\gamma(s)|^2} \diff s\right)^2 \leq \frac{C_2}{n}\left( \int_0^{h_n} \frac{\sqrt{s^2}}{s}\diff s\right)^2= \frac{C_2 h_n^2}{n}\ ,
\end{equation}
where the constant $C_2$ is as in Theorem~\ref{thm:gamma_s_rate},
\item \textit{Bound for $b_x^2$:} By Assumption~\ref{assum:G_class}(c),
\begin{equation}\label{eq:bx_rate}
b_x^2 \leq \left( \int_{h_n}^\infty\frac{1}{s} |\gamma(s)|\diff s\right)^2 \leq \left(\int_{h_n}^\infty C_0 s^{-(\eta+1)}\diff s \right)^2= \left(\frac{C_0}{\eta}\right)^2 {h_n}^{-2\eta} \ ,
\end{equation}
where $C_0$ and $\eta$ are as in Assumption~\ref{assum:G_class}(c).
\end{enumerate}
Taking $h_n=n^{1/2(1+\eta)}$ then yields $h_n^2 n^{-1}=h_n^{-2\eta}$. Combing \eqref{eq:vx_rate}-\eqref{eq:bx_rate} with \eqref{eq:risk_up} we conclude that \eqref{eq:risk_rate} holds for some constant $C_3$.
\end{proof}

%%%%%%%%%%%%%%%%%%%%%%%%%%%%%%%%%%%
\section{Proofs and auxiliary results}\label{sec:proof}

This section is devoted to the proofs of Propositions~\ref{prop:cf_cov_rate} and \ref{prop:Z_cov_rate}. Section~\ref{sec:prelim} provides an overview of the properties of the net input and workload distributions. This is followed by a series of lemmas in Section~\ref{sec:corr} that establish the convergence rates of  the correlation functions of the terms appearing in the estimation error, as represented in \eqref{eq:Zi_s}. The proofs of the propositions are then completed in Section~\ref{sec:prop_proofs}.

%%%%%%%%%%%%%%%%%%%%%%%%%%%%%%%%%%
\subsection{Preliminaries}\label{sec:prelim}

In what follows all expectations and probabilities are with respect to the measure induced by a specific job-size distribution $G\in\mathcal{G}$, but for the sake of brevity we omit this from the notation. Denote the traffic intensity of the queue by $\rho=\lambda\E[B]$. It is well known (e.g., \cite{book_A2003}) that $\rho<1$ is a necessary and sufficient condition for positive Harris recurrence of $(V(t))_{t\geq 0}$ and the existence of a stationary distribution $V$ such that $V(t)\darrow V$ as $t\to\infty$. For many identities associated with the distribution of the workload process it is useful to define the characteristic exponent of the LST of the net input process: $\ell(s):=\log\E[\e^{-s X(1)}]$ for $s\in\mathbb{C}$. The stationary workload distribution is given by the generalized Pollaczek–Khinchine (GPK) formula
\begin{equation}\label{eq:GPK}
\E[\e^{-s V}]=\frac{s\P(V=0)}{\ell(s)}=\frac{s(1-\rho)}{\ell(s)}\ , s\in \mathbb{C}\ .
\end{equation}
Observe that for real $s$, taking $s\to\infty$ yields
\begin{equation}\label{eq:P0}
\P(V=0)=1-\rho \ .
\end{equation}
For any $G\in\mathcal{G}$,  $\E[B^2]<\infty$ by Assumption~\ref{assum:G_class}(b), which implies
\begin{equation*}\label{eq:EV}
\E[V]=\frac{\ell^{(2)}(0)}{2\ell\apost(0)}<\infty\ ,
\end{equation*}
where
\begin{equation}\label{eq:ell_p0}
\ell\apost(0):=\lim_{x\downarrow 0}\frac{\diff}{\diff x}\ell(x)=-\E[X(1)]=1-\lambda\E[B]=\P(V=0)\ ,
\end{equation}
and
\begin{equation}\label{eq:ell_p20}
\ell^{(2)}(0):=\lim_{x\downarrow 0}\frac{\diff^2}{\diff x^2}\ell(x)=\Var[ X(1)]=\lambda\E[B^2]\ .
\end{equation}
Similarly,
\begin{equation}\label{eq:ell_p30}
\ell^{(3)}(0):=\lim_{x\downarrow 0}\frac{\diff^3}{\diff x^3}\ell(x)=-\left(\E[X(1)^3]-\E[X(1)]\E[X(1)^2] \right)\ ,
\end{equation}
and $|\ell^{(3)}(0)|<\infty$ if $\E[B^3]<\infty$. For any real $x\geq 0$, denote the inverse of the characteristic exponent by $\psi(x)=\ell^{-1}(x)$, i.e., $\ell(\psi(x))=x$. Standard arguments yield
\begin{equation}\label{eq:psi_p0}
\psi\apost(0):=\lim_{x\downarrow 0}\frac{\diff}{\diff x}\psi(x)=\frac{1}{\ell\apost(0)}\ ,
\end{equation}
\begin{equation}\label{eq:psi_p20}
\psi^{(2)}(0):=\lim_{x\downarrow 0}\frac{\diff^2}{\diff x^2}\psi(x)=-\frac{\ell^{(2)}(0)}{\ell\apost(0)^3}\ ,
\end{equation}
\begin{equation}\label{eq:psi_p30}
\psi^{(3)}(0):=\lim_{x\downarrow 0}\frac{\diff^3}{\diff x^3}\psi(x)=\frac{3\ell^{(2)}(0)}{\ell\apost(0)^5}-\frac{\ell^{(3)}(0)}{\ell\apost(0)^4}\ . 
\end{equation}
Clearly, $\varphi(s)=\ell(-is)$ for any $s\geq 0$, yielding
\begin{equation}\label{eq:phi_p0}
\varphi^{(k)}(0):=\lim_{x\downarrow 0}\frac{\diff^k}{\diff x^k}\varphi(x)=(-i)^k\ell^{(k)}(0)\ , \ k\geq 1 \ .
\end{equation}
Finally as the characteristic function is hermitian,
\begin{equation}\label{eq:phi_h}
\varphi(s)=\overline{\varphi(-s)}\ , \forall s\in\mathbb{R} \ ,
\end{equation}
where $\overline{x}$ denotes the complex conjugate of $x\in\mathbb{C}$.

The workload process is observed according to an independet Poisson sampling process with rate $\xi>0$. Let $(T_j)_{j\geq 1}$ denote the event times of the sampling process and denote the $j$th workload observation by $V_j=V(T_j)$. The distribution of workload observation $j$ conditional on the previous observation was derived in \cite{KBM2006} (see also \cite[Ch.~4.1]{DM2015}), and can be stated as the CF
\begin{equation} \label{eq:lapVT}
\E[\e^{is V_j}|V_{j-1}] = \frac{\xi}{\xi-\varphi(s)} \Big( \e^{is V_{j-1}} + \frac{is}{\xi}\P(V_j=0|V_{j-1})\Big)\ , s\in\mathbb{R}\ ,
\end{equation}
with an atom at zero with probability
\begin{equation}\label{eq:P0_V0}
 \P(V_j=0|V_{j-1})= \frac{\xi \e^{-\psi(\xi)V_{j-1}}}{\psi(\xi)}\ .
\end{equation}
The following Lemma establishes several useful properties of the exponent function and the CF of the stationary workload distribution. 
%%%
\begin{lemma}\label{lemma:phi_bound} If Assumption~\ref{assum:G_class} holds, then the net-input exponent function $\varphi$ satisfies 
\begin{equation}\label{eq:phi_bound}
    \max\{|\Re\{\varphi(s)\}|,|\Im\{\varphi(s)\}|\}\leq |s|(2-\delta)\ , \forall s\in\mathbb{R}\ .
\end{equation}
This further implies that
\begin{equation}\label{eq:phi_s_bound}
\frac{1}{|s|}|\varphi(s)|\leq (2-\delta)\ , \frac{1}{|s|}|\varphi\apost(s)|\leq M_1\ , \forall s\neq 0\ ,
\end{equation}
where $0<M_1<\infty$ is a constant, and that
\begin{equation}\label{eq:rho_bound}
       \lim_{s\to \infty}\E[\e^{isv}]=\P(V=0)\leq (2-\delta)\inf_{s\geq 0} \left|\E[\e^{isv}]\right|.
    \end{equation}
\end{lemma}
\begin{proof} For the sake of brevity the proof focuses on $s>0$, while keeping in mind that the same arguments apply to $s<0$. First consider the derivative of the jump-size CF at $s\geq 0$,
    \begin{align*}
        \gamma'(s)=\E[iB\e^{isB}]=i\E[B(\cos(sB)]-\E[B\sin(sB))]\ ,
    \end{align*}
    which also yields $\gamma'(0)=i\E[B]$. Hence, by  \eqref{eq:phi_LST},
    \begin{align*}
        \varphi'(s)=\lambda\gamma'(s)-i=i(\lambda\E[B(\cos(sB)]-1)-\lambda\E[B\sin(sB))] \ .
    \end{align*}
    As $\lambda\E[B]<1-\delta$, we conclude that for any $ s\geq 0$,
    \begin{align*}
    |\mathbb{E}[B\sin(sB)]|< 1-\delta \ , \ |\lambda\mathbb{E}[B\cos(sB)]-1| < 2-\delta\ , 
\end{align*}
hence
    \begin{align*}
    \max\{|\Re\{\varphi'(s)\}|,|\Im\{\varphi'(s)\}|\}=\max\{|\lambda\mathbb{E}B\cos(sB)-1|,\lambda|\mathbb{E}B\sin(sB)|\}<2-\delta\ .
\end{align*}
This further implies \eqref{eq:phi_bound} because $\varphi(s)=\int_0^s\varphi'(u)\diff u$ (recalling that $\varphi(0)=0$). By \eqref{eq:phi_p0} and Assumption~\ref{assum:G_class}(a) we have that $|\varphi'(0)|<1-\delta$, and applying L'H\^opital's rule yields 
\begin{align*}
    \lim_{s\downarrow 0}\left|\frac{\varphi(s)}{s}\right|=|\varphi'(0)|<1-\delta<2-\delta\ .
\end{align*}
Hence, as $|\varphi'(s)|<1-\delta$ for any $s>0$, we have that
\begin{align*}
    \left|\frac{\varphi(s)}{s}\right|=\frac{|\int_0^s\varphi'(u)\diff u|}{|s|}\leq 2-\delta\ .
\end{align*}
Similarly, by Assumption~\ref{assum:G_class}(b),
\begin{align*}
    \lim_{s\downarrow 0}\left|\frac{\varphi'(s)}{s}\right|=  \left|\varphi^{(2)}(0)\right|<\infty \ .
\end{align*}
Further observe that $|\varphi'(s)|$ is a continuous function such that $|\varphi'(0)|<1$ and $\lim_{s\to\infty}|\varphi'(s)|=0$. This implies that there exists some $s_0>0$ such that $|\varphi'(s)|/s\leq m_1$ for all $s> s_0$. We conclude that  the second part of \eqref{eq:phi_s_bound} holds with $M_1=\max\{m_1,\sup_{s\in[0,s_0]}|\varphi'(s)|/s\}$.
Finally, the random variable $V$ has an atom on the event $\{V=0\}$ and is absolutely continuous on $(0,\infty)$ with a density $f_V$. The Riemann–Lebesgue Lemma then implies that
    \begin{equation*}
       \E[\e^{isv}]=\P(V=0)+\P(V>0)\int_0^\infty \e^{is u}f_V(u)\diff u\overset{s\to\infty}{\rightarrow} \P(V=0).
    \end{equation*}
    Therefore, by \eqref{eq:phi_s_bound} and the GPK formula \eqref{eq:GPK}, for any $s\geq 0$,
    \begin{align*}
      \left|  \E[\e^{isV]}\right|=\frac{\P(V=0)}{\left|\frac{\varphi(s)}{s}\right|}\geq \frac{\P(V=0)}{2-\delta}\ .
    \end{align*}
\end{proof}

\subsection{Convergence of the covariance terms}\label{sec:corr}

We are interested in the convergence rate of covariance terms of the form $\Cov(g_s(V_0),g_s(V_n))$, where $g_s(\cdot)$ is a bounded function indexed by a complex parameter $s$. As a preliminary step, Lemma~\ref{lemma:quad_ergodic} is a general convergence rate result for the workload process sampled according to a Poisson process. This relies on the quadratic ergodicity of the M/G/1 workload process with a finite third moment of the job-size distribution (see \cite{TT1994} and \cite[Ch.~14.4]{book_MT2009}). The role of this lemma is to facilitate the application of Fubini's theorem in evaluating the expectation of infinite sums in the subsequent analysis.

%%%
\begin{lemma}\label{lemma:quad_ergodic}
Under Assumptions~ \ref{assum:G_class} and \ref{assum:stationary}, let \( g: [0, \infty) \to \mathbb{C} \) be a function satisfying \( |g(v)| < M_3 \) for all \( v \geq 0 \), where \( 0 < M_3 < \infty \). Then, with probability one,  
\begin{align*}  
\sum_{k=1}^\infty \left| \mathbb{E}[g(V_k) \mid V_0] - \mathbb{E}[g(V)] \right| < \infty.  
\end{align*}  
\end{lemma}
\begin{proof}
First note that $\P(V_0<\infty)=1$ when $V_0$ follows the stationary distribution. For an M/G/1 process, it is known that if $\lambda\E[B]<1$ and $\E[B^{k+1}]<\infty$, then the workload process at arrival moments is ergodic with a convergence rate of $n^k$ (see \cite{TT1994}). We will show that this result holds for the workload process sampled according to a Poisson process. To this end we first construct a discrete-time version of the workload process at arrival and sampling times. Due to the memoryless property, at any state of the system the time until the next event of an arrival or an observation is an exponential random variable with rate $\lambda+\xi$. Let  $(\tau_k)_{k\geq 0}$ denote an iid sequence of such inter-event times. Let $I_k$ denote the indicator of event $k$ being a an arrival event, then $I_k$ is a Bernoulli random variable with probability $\lambda/(\lambda+\xi)$, and it is also independent of $\tau_k$. Let $\tilde{V}_k:=\lim_{t\downarrow 0}V(\sum_{j=1}^k\tau_j-t)$ denote the workload just before the $k$'th event. The standard Lindley recursion yields
\begin{align*}
    \tilde{V}_{n}=\max\left\lbrace \tilde{V}_{n-1}+B_{n-1}I_{n-1}-\tau_n,0\right\rbrace\ , n\geq 1\ ,
\end{align*}
with initial values $\tilde{V}_0=V_0$ and $B_0=I_0=0$. Note that in this construction there are job sizes $B_n$ that are generated for sampling events even though they do not really correspond to work added to the system. The process $(\tilde{V}_n)_{n\geq 0}$ is a reflected random walk with increments $U_n=B_{n-1}I_{n-1}-\tau_n$. By Assumption~\ref{assum:G_class}(a), the mean increment satisfies
\begin{align*}
    \E[U_1]=\frac{\lambda\E[B]}{\lambda+\xi}-\frac{1}{\lambda+\xi}<-\frac{\delta}{\lambda+\xi} <0\ , \forall G\in\mathcal{G} \ .
\end{align*}
Moreover, Assumption~\ref{assum:G_class}(b) implies that there exists a constant $u_1<\infty$ such that  $\E[U_1^3]<u_1$, $\forall G\in\mathcal{G}$. Then, by \cite[Thm.~4.1 and Prop.~5.1]{TT1994}, 
\begin{equation}\label{eq:poly_g}
\lim_{n \to\infty}n^2|\E[g(\tilde{V}_n)|V_0] -E[g(V)]|= 0 \ .
\end{equation}
for any bounded function $g$ and initial state $V_0\in[0,\infty)$. Let $S(k)$ denote the index of the $k$'th sampling event in the sequence of events driving $(\tilde{V})_{n\geq 1}$, and note that $S(k)-S(k-1)$ follows a geometric distribution with parameter $\xi/(\lambda+\xi)$. By the SLLN, $S(k)/k\asarrow \xi/(\lambda+\xi)$ as $k\to\infty$. Clearly, $S(k)\geq k$, and in particular $S(k)\asarrow \infty$ as $k\to\infty$. Thus, \eqref{eq:poly_g} implies that
\begin{align*}
\lim_{k\to\infty} k^2|\E[g(V_k)|V_0]-\E[g(V)]|=\lim_{k\to\infty}  \left(\frac{k}{S(k)}\right)^2 S(k)^2|\E[g(\tilde{V}_{S(k)})|V_0]-\E[g(V)]| =0\ ,
\end{align*}
almost surely for any $V_0<\infty$.
We conclude that
\begin{align*}
\sum_{k=1}^\infty |\E[g(V_k)|V_0]-\E[g(V)]|<\infty \ .
\end{align*}
\end{proof}

We next derive an explicit expression for the cumulative deviation of sequences of expectations of certain functions of the workload from their stationary counterpart, given an initial workload. The proofs rely on arguments introduced in \cite{MR2021} and applying Lemma~\ref{lemma:quad_ergodic}. Note that the results pertaining to the workload LST are stated for real values in \cite{MR2021}, i.e., $\E[\e^{-sV}]$ for $s\in[0,\infty)$. However, the proofs do not rely on this assumption and carry over directly to the case of complex values.

%%%
\begin{lemma}\label{lemma:conv_gV}
Assume that $V_0:=V(0)<\infty$ almost surely. Then, for any $s\in\mathbb{R}$ and $G\in\mathcal{G}$ as defined in Assumption~\ref{assum:G_class},
\begin{align*}
\sum_{k=1}^\infty\left(\E[\e^{-is V_k}|V_0]-]\E[\e^{-is V_0}]\right)=\xi k_1(V_0,s)\ ,
\end{align*}
where 
\begin{align*}
k_1(v,s)=\frac{1}{2\varphi(s)}\left(2\e^{is v}\left(isv-1\right)+\frac{s\varphi\apost(0)}{\varphi(s)}\left(2+\frac{\varphi(s)\ell^{(2)}(0)}{\ell\apost(0)^2}\right)\right)\ , v\geq 0,s\in\mathbb{R} \ .
\end{align*}
\end{lemma}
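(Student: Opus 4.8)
The plan is to compute the sum $\sum_{k=0}^\infty(\E[\e^{-isV_k}\mid V_0]-\E[\e^{-isV_0}])$ by setting up a first-step (one sampling epoch) recursion via the conditional CF formula \eqref{eq:lapVT}, and solving for the unknown "deviation function" $k_1$. First I would note that Lemma~\ref{lemma:quad_ergodic} (applied with the bounded function $g(v)=\e^{-isv}$) guarantees the sum converges absolutely, so all the manipulations below are justified; denote the limiting object by $F(V_0):=\sum_{k=0}^\infty(\E[\e^{-isV_k}\mid V_0]-\E[\e^{-isV}])$, using stationarity $\E[\e^{-isV_0}]=\E[\e^{-isV}]$. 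Writing out the $k=0$ term and then conditioning each later term on $V_1$ via the tower property gives the functional equation $F(v)=\big(\e^{-isv}-\E[\e^{-isV}]\big)+\E[F(V_1)\mid V_0=v]$. Then I substitute the explicit transition kernel: by \eqref{eq:lapVT} with $s$ replaced by $-s$ (taking care that $\varphi(-s)=\overline{\varphi(s)}$ by \eqref{eq:phi_h}, or more simply working throughout with the LST-flavoured version so the algebra stays in terms of $\varphi(s)$ as written in the statement), together with the atom probability \eqref{eq:P0_V0}, the term $\E[F(V_1)\mid V_0=v]$ becomes an explicit linear functional of $\e^{-isv}$ and of $\e^{-\psi(\xi)v}$ — provided $F$ itself is a linear combination of such exponentials.

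The key structural step is therefore to posit the ansatz $k_1(v)=a\,\e^{isv}+b\,v\,\e^{isv}+c$ (equivalently the form appearing in the statement, $\frac{1}{2\varphi(s)}(2\e^{isv}(isv-1)+\text{const})$), plug it into the functional equation, and match coefficients of the three "basis functions" $\e^{isv}$, $v\e^{isv}$, and the constant $1$. This yields a small linear system in $a,b,c$ whose solution must reproduce the stated $k_1$. The constants $\varphi\apost(0)$, $\ell^{(2)}(0)$, $\ell\apost(0)$ entering the answer will come from evaluating $\psi(\xi)$-dependent expressions and their low-order behaviour; here I would lean on the preliminary identities \eqref{eq:ell_p0}, \eqref{eq:ell_p20}, \eqref{eq:psi_p0}, \eqref{eq:psi_p20} and the relation $\varphi(s)=\ell(-is)$ to convert everything into the $\varphi$-notation of the statement. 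I also need to check that no "homogeneous solution" can be added — i.e. that the operator $g\mapsto g-\E[g(V_1)\mid V_0=\cdot]$ is injective on the relevant class — which follows from the absolute convergence in Lemma~\ref{lemma:quad_ergodic} (any two solutions differ by a bounded harmonic function whose telescoped sum vanishes).

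The main obstacle I anticipate is bookkeeping rather than conceptual: correctly handling the atom term $\P(V_1=0\mid V_0=v)=\xi\e^{-\psi(\xi)v}/\psi(\xi)$ inside the recursion, because it introduces the extra exponential rate $\psi(\xi)$ that does not appear in the final formula — so its contribution must cancel, or more precisely must be absorbed, when the $\xi\to$(implicit) structure is accounted for. Concretely, after substituting the ansatz the coefficient of $\e^{-\psi(\xi)v}$ must vanish identically, which is really the consistency check that pins down $b$ (and hence forces the $isv-1$ combination). A secondary subtlety is the $\mathbb{R}$-valued vs. $\mathbb{C}$-valued transform: \eqref{eq:lapVT} is stated for the CF at real argument $s$, so replacing $s\to -s$ is legitimate, but I would double-check signs against \eqref{eq:GPK} since the statement mixes $\e^{-isV}$ (LST-like) with $\varphi(s)$ (CF exponent). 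Once the coefficient matching closes and the spurious $\e^{-\psi(\xi)v}$ term is seen to drop, the identity $\sum_{k\geq 0}(\cdots)=\xi k_1(V_0)$ with the stated $k_1$ follows, and the almost-sure qualifier is inherited verbatim from Lemma~\ref{lemma:quad_ergodic} together with $\P(V_0<\infty)=1$.
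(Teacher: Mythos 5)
Your route is genuinely different from the paper's: the paper does not iterate the one-step kernel at all, but converts the Poisson-sampled sum into a continuous-time occupation integral via \cite[Lemma~5]{MR2021}, writes it as a resolvent limit $\xi\lim_{q\downarrow 0}q^{-1}\bigl(\E[\e^{isV(Y_q)}\mid V_0]-\E\e^{isV}\bigr)$ using \eqref{eq:lapVT} at rate $q$, and evaluates the limit by L'H\^opital together with \eqref{eq:psi_p0}--\eqref{eq:psi_p20}. Your first-step functional equation $F(v)=\e^{isv}-\E\e^{isV}+\E[F(V_1)\mid V_0=v]$ is valid (Fubini is justified by the uniform bound in the proof of Lemma~\ref{lemma:quad_ergodic}), but as written the plan has a genuine gap: constants lie in the kernel of the operator $g\mapsto g-\E[g(V_1)\mid V_0=\cdot]$, so coefficient matching can never determine the additive constant in $k_1$. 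Your injectivity claim is false as stated -- bounded harmonic functions for the sampled chain are exactly the constants, so ergodicity gives uniqueness only up to an additive constant. You need an explicit extra normalization, e.g.\ $\E F(V)=0$ under the stationary law (which follows by stationarity and the summable ergodicity bound), and that constant is precisely the part of $k_1$ that is used downstream through $\E k_1(V_0)$ in Lemma~\ref{lemma:conv_kV}.

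A second, more mechanical gap is that your matching needs one-step moment formulas beyond \eqref{eq:lapVT} itself (e.g.\ $\E[V_1\mid V_0=v]$ or $\E[V_1\e^{isV_1}\mid V_0=v]$, obtained by differentiating \eqref{eq:lapVT} in $s$), which the proposal does not supply; and if you carry the matching out against the basis $\{\e^{isv},\,v\e^{isv},\,1\}$ suggested by the printed form of $k_1$, the linear system does not close: the one-step map sends $v\e^{isv}$ to $\tfrac{\xi}{\xi-\varphi(s)}v\e^{isv}+\cdots$, so the $v\e^{isv}$ coefficient is forced to zero, after which the required cancellation of the $\e^{-\psi(\xi)v}$ term kills the $\e^{isv}$ coefficient as well. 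The system does close on the basis $\{\e^{isv},\,v,\,1\}$, producing coefficients $-(\xi-\varphi(s))/\varphi(s)$ for $\e^{isv}$ and $is\xi/\varphi(s)$ for $v$; this is consistent with a direct Taylor expansion of the paper's own resolvent limit (where the derivative of $is\e^{-\psi(q)V_0}$ contributes $-is\psi\apost(0)V_0$ with no $\e^{isV_0}$ factor), so do not try to force the match to the printed $\e^{isv}(isv-1)$ term -- re-derive it. Finally, be explicit about indexing: your functional equation characterizes $\sum_{k\geq 0}$ (it automatically contains the $k=0$ term $\e^{isv}-\E\e^{isV}$), whereas the continuous-time identity the paper invokes corresponds to the renewal density of the sampling epochs $T_1,T_2,\ldots$; you must state which object you are solving for before comparing constants.
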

\begin{proof}
Let $Y_q$ denote an exponential random variable with rate $q>0$.
 By \cite[Lemma.~5]{MR2021} and the dominated convergence theorem, for any $V_0<\infty$,
\begin{align*}
\sum_{k=1}^\infty\left(\E[\e^{isV_k}|V_0]-\E[\e^{is V}]\right) &= \xi\int_0^\infty \left(\E[\e^{isV(u)}|V_0]-\E[\e^{is V}]\right)\diff u \\
&=
\xi \lim_{q\downarrow 0}\frac{1}{q}\int_0^\infty q \e^{-q u} \left(\E[\e^{isV(u)}|V_0]-\E[\e^{is V}]\right)\diff u \\
&=
\xi \lim_{q\downarrow 0}\frac{1}{q}\left( \E[\e^{isV(Y_q)}|V_0]-\E[\e^{is V_0}]\right)
 \ .
\end{align*}
Applying \eqref{eq:lapVT} and the GPK formula \eqref{eq:GPK} yields 
\begin{align*}
\sum_{k=1}^\infty\left(\E[\e^{isV_k}|V_0]-\E[\e^{is V_0}]\right) &=
\xi \lim_{q\downarrow 0}\frac{1}{q}\left(\frac{q}{q-\varphi(s)} \Big( \e^{is V_0} + \frac{is \e^{-\psi(q)V_0}}{\psi(q)}\Big)-\frac{s\varphi\apost(0)}{\varphi(s)}\right) \\
&=\xi \lim_{q\downarrow 0}\frac{q\left(\psi(q)\e^{isV_0}+is\e^{-\psi(q)V_0}\right)-\frac{s\varphi\apost(0)}{\varphi(s)}\psi(q)(q-\varphi(s))}{q(q-\varphi(s))\psi(q)}
 \ .
\end{align*}
Applying L'H\^opital's rule twice yields
\small{
\begin{align*}
& \lim_{q\downarrow 0}\frac{q\left(\psi(q)\e^{isV_0}+is\e^{-\psi(q)V_0}\right)-\frac{s\varphi\apost(0)}{\varphi(s)}\psi(q)(q-\varphi(s))}{q(q-\varphi(s))\psi(q)} \\
&= \lim_{q\downarrow 0} \frac{\psi(q)\e^{isV_0}+is\e^{-\psi(q)V_0}+q\left(\psi\apost(q)\e^{isV_0}-is\psi(q)V_0\e^{-\psi(q)V_0}\right)-\frac{s\varphi\apost(0)}{\varphi(s)}(\psi\apost(q)(q-\varphi(s))+\psi(q))}{\psi(q)(q-\varphi(s))+q(\psi(q)+\psi\apost(q)(q-\varphi(s))} \\
&= \lim_{q\downarrow 0} \frac{2\psi\apost(q)\e^{isV_0}(1-isV_0)+q\left(\psi^{(2)}(q)\e^{isV_0}+is(\psi(q)V_0)^2\e^{-\psi(q)V_0}\right)-\frac{s\varphi\apost(0)}{\varphi(s)}(\psi^{(2)}(q)(q-\varphi(s))+2\psi\apost(q))}{2\psi(q)+2\psi\apost(q)(q-\varphi(s))+q(2\psi\apost(q)-\varphi(s))\psi^{(2)}(q))} \\
&= \frac{2\psi\apost(0)\e^{isV_0}(1-isV_0)-\frac{s\varphi\apost(0)}{\varphi(s)}(2\psi\apost(0)-\psi^{(2)}(0)\varphi(s))}{-2\varphi(s)\psi\apost(0)} 
 \ .
\end{align*}}
The proof is completed by applying identities \eqref{eq:psi_p0}-\eqref{eq:psi_p20}, keeping in mind that $\E[B^3]<\infty$ together with \eqref{eq:ell_p0}-\eqref{eq:ell_p30} imply that all terms in $k_1(\cdot)$ are finite.
\end{proof}

%%%
\begin{lemma}\label{lemma:conv_kV}
Suppose Assumptions~\ref{assum:G_class}-\ref{assum:stationary} hold. Then, for any $(s_1,s_2)\in\mathbb{R}^2$ and $G\in\mathcal{G}$,
\begin{align*}
\sum_{k=1}^\infty\left(\E[\e^{i(s_1V_0+s_2V_k)}]-\E[\e^{is_1 V}]\E[\e^{is_2 V}]\right)=\xi \kappa_1(s_1,s_2;G)\ ,
\end{align*}
where
\begin{align*}
\kappa_1(s_1,s_2;G)=\frac{s_1\ell\apost(0)}{\varphi(s_1)}\left(\frac{\varphi\apost(0)(s_2\varphi(s_2)-s_2^2\varphi\apost(s))}{\varphi(s_2)^3}+ \frac{s_2\varphi\apost(0)\ell^{(2)}(0)}{2\varphi(s_2)\ell\apost(0)^2}\right) \ .
\end{align*}
Moreover, there exists a constant $K_1$ such that
\begin{align*}
\sup_{G\in\mathcal{G}}\sup_{(s_1,s_2)\in\mathbb{R}^2}|\kappa_1(s_1,s_2;G)|\leq K_1<\infty \ .
\end{align*}
\end{lemma}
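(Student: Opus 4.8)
The plan is to mirror the structure of the proof of Lemma~\ref{lemma:conv_gV}, replacing the conditioning on $V_0$ by the stationary expectation. First I would observe that under Assumption~\ref{assum:stationary} the sequence $(V_j)_{j\ge 0}$ is stationary with $V_0\sim V$, so that by conditioning on $V_0$ and using the tower property,
\[
\sum_{k=0}^\infty\left(\E[\e^{-is(V_0+V_k)}]-\E^2[\e^{-is V}]\right)=\E\!\left[\e^{-is V_0}\sum_{k=0}^\infty\left(\E[\e^{-is V_k}\mid V_0]-\E[\e^{-is V}]\right)\right].
\]
The inner sum is exactly the quantity computed in Lemma~\ref{lemma:conv_gV} (applied with $-s$ in place of $s$, or equivalently using the Hermitian property \eqref{eq:phi_h}), namely $\xi k_1(V_0)$ with $k_1$ as given there. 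The interchange of $\E[\cdot]$ and $\sum_k$ is justified by Lemma~\ref{lemma:quad_ergodic}: the function $v\mapsto \e^{-isv}$ is bounded, so $\sum_{k}|\E[\e^{-isV_k}\mid V_0]-\E[\e^{-isV}]|<\infty$ almost surely with a bound (from the proof of that lemma) that is summable and uniform in $V_0$, giving dominated convergence.

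Next I would carry out the remaining expectation $\E[\e^{-isV_0}k_1(V_0)]$ explicitly. Writing out $k_1(v)$ from Lemma~\ref{lemma:conv_gV} (with the sign of $s$ flipped), the term involves $\E[\e^{-2isV_0}(-isV_0-1)]$ and a constant multiple of $\E[\e^{-isV_0}]$. Both of these are known: $\E[\e^{-isV}]$ is the GPK formula \eqref{eq:GPK_V} (equivalently \eqref{eq:GPK} with $s\mapsto -is$), giving $\E\e^{-isV}=\frac{-is\,\P(V=0)}{\varphi(-s)}$ after the substitution, and similarly $\E[\e^{-2isV}]$ follows from GPK at the argument $2s$; the term $\E[V\e^{-2isV}]$ is obtained by differentiating the GPK transform. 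Assembling these, using $\P(V=0)=\ell\apost(0)=1-\rho$ from \eqref{eq:P0}–\eqref{eq:ell_p0} and simplifying, yields the stated closed form $\xi\kappa_1(s;G)$. I expect the algebra here to be somewhat involved but entirely routine — the same flavor as the L'H\^opital computation in Lemma~\ref{lemma:conv_gV} — so I would not grind through it in detail, only indicate that it reduces to rational combinations of $\varphi(s)$, $\varphi(-s)$, $\varphi\apost(s)$, $\varphi\apost(0)$ and $\ell^{(2)}(0)$.

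For the uniform bound $\sup_{G\in\mathcal{G}}\sup_{s\ge0}|\kappa_1(s;G)|\le K_1<\infty$, the key point is that every factor in $\kappa_1$ is controlled uniformly over $\mathcal{G}$. The denominators $\varphi(s)^3$ and $\varphi(-s)$ are the main concern: near $s=0$ one has $\varphi(s)/s\to\varphi\apost(0)=i(\lambda\E B-1)$, whose modulus is bounded below by $\delta$ uniformly in $\mathcal{G}$ by Assumption~\ref{assum:G_class}(a), so $|\varphi(s)|\asymp |s|$ as $s\downarrow0$ and the apparent singularities in $\kappa_1$ are removable; for $s$ bounded away from $0$, $|\varphi(s)|\ge |s| - \lambda|\gamma(s)-1|\ge |s|(1 - 2\lambda/|s|)$ is eventually bounded below, and on any compact interval away from $0$ one uses continuity together with the fact that $\varphi(s)=0$ would contradict the GPK formula. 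The numerator factors $s\varphi(s)$, $s^2\varphi\apost(s)$ are bounded after division by the denominators using Lemma~\ref{lemma:phi_bound} (which gives $|\varphi(s)|\le M_1 s$ and $|\varphi\apost(s)|\le M_2 s$ uniformly), and $\ell^{(2)}(0)=\lambda\E B^2$ is bounded by Assumption~\ref{assum:G_class}(b). Combining the small-$s$, moderate-$s$ and large-$s$ regimes gives a single finite constant $K_1$ depending only on $\lambda,\delta,M$. The main obstacle is bookkeeping: making the cancellation of the $s\to0$ singularity fully explicit and checking that the lower bound on $|\varphi(s)|$ is genuinely uniform over all of $\mathcal{G}$ and all $s\ge0$ simultaneously; once that is pinned down, the rest is a finite collection of elementary estimates.
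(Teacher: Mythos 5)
Your opening reduction (tower property, Lemma~\ref{lemma:conv_gV}, interchange via Lemma~\ref{lemma:quad_ergodic}) looks like the paper's outline, but the step you defer as ``routine algebra'' is exactly where your route cannot close. Keeping the factor $\e^{-isV_0}$ inside the expectation, you must evaluate $\E[\e^{-isV_0}k_1(V_0)]$; multiplying the $\e^{-isV_0}(\mp isV_0-1)$ part of $k_1$ by $\e^{-isV_0}$ produces $\e^{-2isV_0}$-terms, whose expectations are the GPK transform and its derivative at the argument $2s$. Hence your closed form necessarily involves $\varphi(\pm 2s)$ and the derivative of the workload CF at $2s$, whereas the target $\kappa_1(s;G)$ contains only $\varphi(s)$, $\varphi(-s)$ and $\varphi\apost(s)$; no simplification removes the $2s$-dependence, so the asserted ``assembling these yields the stated closed form'' would fail. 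The paper's proof diverges from yours at precisely this point: it expresses the sum as $\E\e^{-isV}\cdot\E\bigl[\sum_{k\geq 0}\bigl(\E[\e^{-isV_k}\mid V_0]-\E\e^{-isV}\bigr)\bigr]=\xi\,\E\e^{-isV}\,\E k_1(V_0)$, i.e.\ the stationary CF times the stationary mean of $k_1$, and then computes $\E k_1(V_0)$ from the identity $\E[iV_0\e^{isV_0}]=\varphi\apost(0)(\varphi(s)-s\varphi\apost(s))/\varphi(s)^2$ and the GPK formula --- that factorized product is what the stated $\kappa_1$ is. Your tower-property expression $\E[\e^{-isV_0}k_1(V_0)]$ and the factorized product $\E[\e^{-isV}]\,\E[k_1(V_0)]$ are in general different quantities (both factors are functions of the same $V_0$), so before any algebra you must reconcile your decomposition with the one the paper actually uses; as written, your proposal neither reaches the stated formula nor justifies the factorization it would need. (A smaller inaccuracy: the bound in Lemma~\ref{lemma:quad_ergodic} is not uniform in the initial state --- the constant there depends on $v_0$ --- so your appeal to a ``uniform in $V_0$'' domination also needs the integrability of that constant under the stationary law.)

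On the uniform bound $\sup_{G\in\mathcal{G}}\sup_{s\geq 0}|\kappa_1(s;G)|<\infty$, your three-regime plan is in the right spirit and close to the paper's (which combines L'H\^opital at $s=0$, using Assumption~\ref{assum:G_class}(b) for $\varphi^{(3)}(0)$, with Lemma~\ref{lemma:phi_bound}), but you leave the crucial uniform lower bound on $|\varphi(s)|$ as open bookkeeping. It can be pinned down in one line: $|\varphi(s)|\geq|\Im\varphi(s)|=|s-\lambda\E\sin(sB)|\geq s-\lambda s\E B\geq \delta s$ for all $s\geq 0$ and all $G\in\mathcal{G}$, by $|\sin x|\leq x$ and Assumption~\ref{assum:G_class}(a); this handles small, moderate and large $s$ simultaneously, with the apparent $s\downarrow 0$ singularities of $\kappa_1$ cancelling by Taylor expansion as in the paper. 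That part is fixable; the substantive gap is the unverified closed-form identification above.
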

\begin{proof}[Proof of Lemma~\ref{lemma:conv_kV}]
We make use of the following identity (see \cite[Eq.~(11)]{MR2021}),
\begin{align*}
\E[iV_0\e^{is V_0}]=\frac{\varphi\apost(0)(\varphi(s)-s\varphi\apost(s))}{\varphi(s)^2}\ .
\end{align*}
Then by Lemma~\ref{lemma:conv_gV}, applying \eqref{eq:GPK}, \eqref{eq:psi_p0}, and \eqref{eq:psi_p20}
yields 
\begin{align*}
\E[k_1(V_0,s)] &= \frac{1}{2\varphi(s)}\left(\frac{2s\varphi\apost(0)(\varphi(s)-s\varphi\apost(s))}{\varphi(s)^2}-\frac{2s\varphi\apost(0)}{\varphi(s)}+\frac{s\varphi\apost(0)}{\varphi(s)}\left(2+\frac{\varphi(s)\ell^{(2)}(0)}{\ell\apost(0)^2}\right)\right) \\
&=\frac{\varphi\apost(0)(s\varphi(s)-s^2\varphi\apost(s))}{\varphi(s)^3}+ \frac{s\varphi\apost(0)\ell^{(2)}(0)}{2\varphi(s)\ell\apost(0)^2} \ .
\end{align*}
Applying Lemma~\ref{lemma:quad_ergodic} and Fubini's theorem we have that
\begin{align*}
\sum_{k=1}^\infty\left(\E[\e^{i(s_1V_0+s_2V_k)}]-\E[\e^{is_1 V}]\E[\e^{is_2 V}]\right) &= \sum_{k=1}^\infty\left(\E[\E[\e^{-i(s_1V_0+s_2V_k)}|V_0]]-\E[\e^{is_1 V}]\E[\e^{is_2 V}]\right) \\
&= \sum_{k=1}^\infty\E[\e^{-is_1 V}]\left(\E[\E[\e^{-is_2V_k}|V_0]]-\E[\e^{-is_2 V}]\right) \\
&=\E[\e^{-is_1 V}]\E\left[ \sum_{k=1}^\infty\left(\E[\e^{-is_1V_k}|V_0]-\E[e^{-is_2 V}]\right)\right] \\
&= \xi \E[k_1(V_0,s_2)]\E[\e^{-is_1 V}]=\xi\kappa_1(s;G) \ .
\end{align*}
Applying L'H\^opital's rule three times to $\kappa_1(s_1,s_2;G)$ to establish that the limit as $s_2\to 0$ is bounded under the following condition,
\begin{align*}
\sup_{G\in\mathcal{G}}|\kappa_1(0,0;G)|<\infty \ \Leftrightarrow \ \sup_{G\in\mathcal{G}}\varphi^{(3)}(0)<\infty\ . 
\end{align*}
This condition is satisfied by \eqref{eq:ell_p30} and Assumption~\ref{assum:G_class}(b). Finally, Lemma~\ref{lemma:phi_bound} ensures that as $\kappa_1(s_1,s_2;G)$ is continuous (with respect to $s_1$ and $s_2$) and uniformly bounded (with respect to $G\in\mathcal{G}$) and the statement follows.
\end{proof}

%%%
\begin{lemma}\label{lemma:conv_Vk=0}
Assume that $V_0:=V(0)<\infty$ almost surely. Then, for any $G\in\mathcal{G}$ as defined in Assumption~\ref{assum:G_class},
\begin{align*}
k_2(V_0):=\sum_{k=1}^\infty\left(\P(V_k=0|V_0)-\P(V=0)\right)=\frac{\xi}{2} \left(\frac{\ell^{(2)}(0)}{\ell\apost(0)}-2V_0\right)\ .
\end{align*}
\end{lemma}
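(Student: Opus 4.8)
The plan is to mirror the proof of Lemma~\ref{lemma:conv_gV}, now with the bounded function $v\mapsto\mathbf{1}\{v=0\}$ in place of $v\mapsto\e^{-isv}$. First I would apply \cite[Lemma~5]{MR2021} to pass from the discrete sum over sampling epochs to a continuous-time integral and then to a Laplace evaluation at an independent exponential time: writing $Y_q$ for an $\mathrm{Exp}(q)$ random variable independent of the workload process,
\begin{align*}
k_2(V_0)&=\sum_{k=0}^\infty\left(\P(V_k=0|V_0)-\P(V=0)\right)\\
&=\xi\lim_{q\downarrow 0}\frac{1}{q}\left(\P(V(Y_q)=0|V_0)-\P(V=0)\right)\ ,
\end{align*}
where the interchange of sum and integral, and the finiteness of the series, are justified exactly as in Lemma~\ref{lemma:conv_gV}, using the polynomial-in-$t$ decay of $\E[\mathbf{1}\{V(t)=0\}|V_0]-\P(V=0)$ for a fixed finite initial state established in the proof of Lemma~\ref{lemma:quad_ergodic}.

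Next I would insert the two explicit ingredients. The workload observed at an independent $\mathrm{Exp}(q)$ time is precisely a sampling epoch of a rate-$q$ Poisson process, so \eqref{eq:P0_V0} gives $\P(V(Y_q)=0|V_0)=q\,\e^{-\psi(q)V_0}/\psi(q)$, while $\P(V=0)=\ell\apost(0)$ by \eqref{eq:P0} and \eqref{eq:ell_p0}. Substituting reduces the problem to evaluating
\begin{align*}
\lim_{q\downarrow 0}\frac{q\,\e^{-\psi(q)V_0}-\ell\apost(0)\,\psi(q)}{q\,\psi(q)}\ ,
\end{align*}
which is a $0/0$ limit since $\psi(0)=0$.

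I would then resolve this by L'H\^opital's rule applied twice, exactly as in Lemma~\ref{lemma:conv_gV}. After the first differentiation, numerator and denominator still vanish at $q=0$ once $\psi\apost(0)=1/\ell\apost(0)$ from \eqref{eq:psi_p0} is substituted; after the second differentiation, using again $\psi\apost(0)=1/\ell\apost(0)$ together with $\psi^{(2)}(0)=-\ell^{(2)}(0)/\ell\apost(0)^3$ from \eqref{eq:psi_p20}, the limit comes out to $\frac{1}{2}\left(\ell^{(2)}(0)/\ell\apost(0)-2V_0\right)$. Multiplying by $\xi$ gives the stated formula for $k_2(V_0)$; every term is finite because $\ell^{(2)}(0)=\lambda\E B^2<\infty$ under Assumption~\ref{assum:G_class}(b).

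The only slightly delicate point is the bookkeeping in the second L'H\^opital step, where the product and chain rules produce several terms in $\psi$, $\psi\apost$ and $\psi^{(2)}$ that must be evaluated at $q=0$; but there is no genuine obstacle, and the computation is in fact shorter than in Lemma~\ref{lemma:conv_gV} because $\mathbf{1}\{v=0\}$ generates fewer terms than $\e^{-isv}$. I would also note in passing that, unlike Lemma~\ref{lemma:conv_kV}, this lemma requires no stationarity of the workload sequence, only $V_0<\infty$ almost surely.
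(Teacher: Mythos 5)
Your proposal is correct and follows essentially the same route as the paper: apply \cite[Lemma~5]{MR2021} to reduce the series to the limit $\xi\lim_{q\downarrow 0}q^{-1}\bigl(\P(V(Y_q)=0\mid V_0)-\P(V=0)\bigr)$, substitute \eqref{eq:P0_V0} (with rate $q$) and $\P(V=0)=\ell\apost(0)$, and evaluate the resulting $0/0$ limit by two applications of L'H\^opital's rule together with \eqref{eq:psi_p0}--\eqref{eq:psi_p20}, which yields exactly $\tfrac{\xi}{2}\bigl(\ell^{(2)}(0)/\ell\apost(0)-2V_0\bigr)$. Your stated intermediate values and the final limit agree with the paper's computation, so no gap remains.
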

\begin{proof}
Applying \cite[Lemma.~5]{MR2021} yields
\begin{align*}
\sum_{k=1}^\infty\left(\P(V_k=0|V_0=v)-\P(V=0)\right) = \xi\lim_{q\downarrow 0}\frac{1}{q}\left(\P(V(Y_q)=0|V_0=v)-\P(V=0)\right) \ .
\end{align*}
Plugging in \eqref{eq:P0} and \eqref{eq:P0_V0}, and then computing the limit via L'H\^opital's rule yields
\begin{align*}
k_2(v) &=\xi\lim_{q\downarrow 0}\frac{q\e^{-\psi(q)v}-\psi(q)\P(V=0)}{q\psi(q)} \\
&= \xi\lim_{q\downarrow 0} \frac{\e^{-\psi(q)v}-q\psi\apost(q)v\e^{-\psi(q)v}-\psi\apost(q)\ell\apost(0)}{\psi(q)+q\psi\apost(q)}\\
&= \xi\lim_{q\downarrow 0}\frac{-2\psi\apost(q)v\e^{-\psi(q)v}-q(\psi^{(2)}(q)v\e^{-\psi(q)v}-(\psi^{'}(q)v)^2\e^{-\psi(q)v})-\psi^{(2)}(q)\ell\apost(0)}{2\psi^{'}(q)+q\psi^{(2)}(q)} \\
&= \xi\frac{-2\psi\apost(0)v-\psi^{(2)}(0)\ell\apost(0)}{2\psi\apost(0)} \ .
\end{align*}
The result follows from \eqref{eq:psi_p0} and \eqref{eq:psi_p20}.
\end{proof}

%%%
\begin{lemma}\label{lemma:conv_Vk0=0}
Suppose Assumption~\ref{assum:G_class} holds. Then, for any $G\in\mathcal{G}$,
\begin{align*}
\sum_{k=1}^\infty\left(\P(V_k=V_0=0)-\P(V=0)^2\right)=\frac{\xi\ell^{(2)}(0)}{2}<\infty\ .
\end{align*}
\end{lemma}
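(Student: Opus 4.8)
The plan is to mimic the structure of Lemma~\ref{lemma:conv_Vk=0}, but now tracking the joint event $\{V_k=0, V_0=0\}$ rather than the conditional probability $\P(V_k=0\mid V_0)$. First I would write
\begin{align*}
\sum_{k=0}^\infty\left(\P(V_k=V_0=0)-\P(V=0)^2\right)=\E\left[\mathbf{1}\{V_0=0\}\sum_{k=0}^\infty\left(\P(V_k=0\mid V_0)-\P(V=0)\right)\right]\ ,
\end{align*}
where the interchange of summation and expectation is justified by Lemma~\ref{lemma:quad_ergodic} applied to the bounded function $g(v)=\mathbf{1}\{v=0\}$ (dominated convergence, using that the tail sum is summable with probability one). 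By Lemma~\ref{lemma:conv_Vk=0}, the inner sum equals $k_2(V_0)=\tfrac{\xi}{2}\bigl(\tfrac{\ell^{(2)}(0)}{\ell\apost(0)}-2V_0\bigr)$, so the whole expression becomes $\E\bigl[\mathbf{1}\{V_0=0\}\cdot\tfrac{\xi}{2}(\tfrac{\ell^{(2)}(0)}{\ell\apost(0)}-2V_0)\bigr]$.

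Since on the event $\{V_0=0\}$ we have $V_0=0$, the term $-2V_0$ contributes nothing, and the expectation collapses to $\tfrac{\xi}{2}\cdot\tfrac{\ell^{(2)}(0)}{\ell\apost(0)}\cdot\P(V_0=0)$. Now I would invoke \eqref{eq:P0}, namely $\P(V=0)=1-\rho=\ell\apost(0)$ (the latter equality is \eqref{eq:ell_p0}), and Assumption~\ref{assum:stationary} which gives $\P(V_0=0)=\P(V=0)=\ell\apost(0)$. Hence the $\ell\apost(0)$ in the denominator cancels against $\P(V_0=0)$, leaving exactly $\tfrac{\xi}{2}\ell^{(2)}(0)$. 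Finiteness follows from \eqref{eq:ell_p20}, $\ell^{(2)}(0)=\lambda\E B^2$, together with the uniform second-moment bound in Assumption~\ref{assum:G_class}(b).

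The only genuine subtlety is the justification of the interchange in the first display; everything after that is bookkeeping. The dominated-convergence argument needs the partial sums $\sum_{k=0}^{N}(\P(V_k=0\mid V_0)-\P(V=0))$ to be dominated, uniformly in $N$, by an integrable random variable — this is exactly what the absolute summability in Lemma~\ref{lemma:quad_ergodic} provides (the bound $c\xi^2/((k-1)(k-2))$ there is in fact deterministic once $V_0<\infty$, so domination is automatic). I would also remark that one must be slightly careful that $\mathbf{1}\{v=0\}$ is a bounded, though discontinuous, function; Lemma~\ref{lemma:quad_ergodic} is stated for general bounded $g$, so this is fine. I expect this to be the main (and essentially the only) obstacle worth a sentence of care in the write-up.
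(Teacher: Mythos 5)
Your proposal is correct and follows essentially the same route as the paper: condition on $V_0$, apply Lemma~\ref{lemma:conv_Vk=0} to identify the inner sum as $k_2(V_0)$ evaluated at $V_0=0$, and cancel $\ell\apost(0)$ against $\P(V=0)=\ell\apost(0)$. The paper simply writes $\P(V_k=V_0=0)=\P(V_k=0\mid V_0=0)\P(V_0=0)$ directly, which reduces the series to a purely deterministic computation and makes the interchange you worry about unnecessary; your dominated-convergence remark is a harmless extra precaution (and on $\{V_0=0\}$ the bound from Lemma~\ref{lemma:quad_ergodic} is indeed deterministic).
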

\begin{proof}
The conclusion that the series converges is a direct corollary of Lemma~\ref{lemma:quad_ergodic} for $g(v)=\mathbf{1}\{v=0\}$. To obtain the explicit term for the series we apply \cite[Lemma.~5]{MR2021} once more. By Lemma~\ref{lemma:conv_Vk=0}, as $V_0\sim V$, 
\begin{align*}
\sum_{k=1}^\infty\left(\P(V_k=V_0=0)-\P(V=0)^2\right) &= \sum_{k=1}^\infty\left(\P(V_k=0|V_0=0)\P(V_0=0)-\P(V=0)^2\right) \\
&= \P(V=0)\E[k_2(0)]= \frac{\P(V=0)\xi\ell^{(2)}(0)}{2\ell\apost(0)}\ .
\end{align*}
Finally, plugging in $\P(V=0)= \ell\apost(0)$ (see Eq.~\ref{eq:P0}) and identities \eqref{eq:psi_p0}-\eqref{eq:psi_p20} yield the result.
\end{proof}

%%%
\begin{lemma}\label{lemma:conv_V0eis}
Suppose Assumptions~\ref{assum:G_class}-\ref{assum:stationary} hold. Then, for any $s\in\mathbb{R}$ and $G\in\mathcal{G}$,
\begin{align*}
\kappa_2(s;G)=\sum_{k=1}^\infty\left(\E[\mathbf{1}\{V_k=0\}\e^{-is V_0}]-\P(V=0)\E[\e^{-is V}]\right)=\frac{\xi \varphi\apost(0)\ell^{(2)}(0)s}{2\ell\apost(0)\varphi(s)}\ .
\end{align*}
Moreover, there exists a constant $K_2$ such that 
\begin{align*}
\sup_{G\in\mathcal{G}}\sup_{s\geq 0}|\kappa_2(s;G)|\leq K_2<\infty\ .
\end{align*}
\end{lemma}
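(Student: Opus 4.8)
The plan is to mimic the structure of the previous two lemmas that compute similar cumulative-deviation sums, namely Lemmas~\ref{lemma:conv_gV} and~\ref{lemma:conv_Vk=0}, and then patch them together exactly as was done in Lemma~\ref{lemma:conv_kV}. First I would condition on $V_0$ and write
\begin{align*}
\kappa_2(s;G)=\sum_{k=0}^\infty\E\!\left[\e^{-isV_0}\left(\P(V_k=0\mid V_0)-\P(V=0)\right)\right],
\end{align*}
which is legitimate once the inner sum converges. Convergence of $\sum_k(\P(V_k=0\mid V_0)-\P(V=0))$ is exactly the content of Lemma~\ref{lemma:conv_Vk=0} (it is the choice $g(v)=\mathbf 1\{v=0\}$ in Lemma~\ref{lemma:quad_ergodic}), and that lemma moreover gives the closed form $k_2(V_0)=\tfrac{\xi}{2}(\ell^{(2)}(0)/\ell\apost(0)-2V_0)$. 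Invoking Lemma~\ref{lemma:quad_ergodic} to justify swapping the sum and the expectation (dominated convergence, since $|\e^{-isV_0}|\le 1$ and the tail bound $|\P(V_k=0\mid V_0)-\P(V=0)|\le c\xi^2/((k-1)(k-2))$ is summable uniformly), I obtain
\begin{align*}
\kappa_2(s;G)=\E\!\left[\e^{-isV_0}k_2(V_0)\right]=\frac{\xi}{2}\left(\frac{\ell^{(2)}(0)}{\ell\apost(0)}\,\E\e^{-isV}-2\,\E\!\left[V_0\e^{-isV_0}\right]\right).
\end{align*}

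Next I would substitute the two explicit transforms that are already available in the preamble and in the proof of Lemma~\ref{lemma:conv_kV}: the GPK formula $\E\e^{-isV}=s\varphi\apost(0)/\varphi(s)$ (using $\P(V=0)=\ell\apost(0)$ together with $\varphi\apost(0)=\ell\apost(0)/(\,\cdot\,)$ — more precisely via \eqref{eq:GPK_V} and \eqref{eq:phi_p0}), and the identity $\E iV_0\e^{isV_0}=\varphi\apost(0)(\varphi(s)-s\varphi\apost(s))/\varphi(s)^2$ quoted from \cite[Eq.~(11)]{MR2021}, which after taking conjugates / replacing $s$ by $-s$ and using hermitian symmetry \eqref{eq:phi_h} gives $\E V_0\e^{-isV_0}$. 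Plugging these in and simplifying — the algebra is routine but must be done carefully because of the sign flip $\varphi(-s)=\overline{\varphi(s)}$ — should collapse to the stated expression $\xi\varphi\apost(0)\ell^{(2)}(0)s/(2\ell\apost(0)\varphi(s))$; I expect a cancellation of the $\varphi(s)-s\varphi\apost(s)$ piece against part of the $\ell^{(2)}(0)$ term, analogous to what happens in Lemma~\ref{lemma:conv_kV}.

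For the uniform bound $\sup_{G\in\mathcal G}\sup_{s\ge0}|\kappa_2(s;G)|\le K_2<\infty$, I would argue exactly as in Lemma~\ref{lemma:conv_kV}: write $\kappa_2(s;G)=\tfrac{\xi}{2}\varphi\apost(0)\ell^{(2)}(0)\cdot\tfrac{1}{(1/s)\varphi(s)}\cdot\tfrac{1}{s\,\text{(no, keep it simple)}}$ — concretely, note $\kappa_2(s;G)=\tfrac{\xi}{2}\varphi\apost(0)\ell^{(2)}(0)/(\ell\apost(0)\,p(s))$ where $p(s)=\varphi(s)/s$, so by Lemma~\ref{lemma:phi_bound} the denominator is bounded and, crucially, bounded away from zero (its modulus at $0$ equals $|\varphi\apost(0)|=1-\lambda\E B>\delta>0$ by Assumption~\ref{assum:G_class}(a), and it is continuous and nonvanishing for $s>0$ because $\varphi(s)\ne0$). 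The numerator is uniformly bounded since $|\varphi\apost(0)|\le 1$ and $|\ell^{(2)}(0)|=\lambda\E B^2\le \lambda M^{2/3}$ (or simply $\le\lambda M$ after using $\E B^2\le(\E B^3)^{2/3}$, bounded by Assumption~\ref{assum:G_class}(b)). Continuity at $s=0$ follows, as in the previous lemma, by one application of L'H\^opital's rule, reducing finiteness of $\kappa_2(0;G)$ to finiteness of $\varphi^{(2)}(0)$, i.e.\ to $\E B^2<\infty$. The only genuine obstacle is the transform-algebra simplification in the middle step — getting every factor of $i$ and every $\varphi(-s)$ versus $\varphi(s)$ right — but it is a finite deterministic computation with all ingredients already in hand, so I do not anticipate a conceptual difficulty.
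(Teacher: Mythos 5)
Your probabilistic skeleton is the same as the paper's (Lemma~\ref{lemma:quad_ergodic} to justify the interchange, Lemma~\ref{lemma:conv_Vk=0} for the inner sum, then the GPK/\cite{MR2021} transform identities), and the conditioning step is carried out correctly: you arrive at
\begin{align*}
\kappa_2(s;G)=\E\bigl[\e^{-isV_0}k_2(V_0)\bigr]=\frac{\xi}{2}\left(\frac{\ell^{(2)}(0)}{\ell\apost(0)}\,\E\e^{-isV}-2\,\E\bigl[V_0\e^{-isV_0}\bigr]\right).
\end{align*}
The genuine gap is the step you defer: you assert that routine algebra will collapse this to the displayed right-hand side, "analogous to Lemma~\ref{lemma:conv_kV}". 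It will not. The stated right-hand side is exactly $k_2(0)\cdot s\varphi\apost(0)/\varphi(s)$, i.e.\ (in the paper's GPK convention) the stationary CF times the deviation series started from $V_0=0$, with no counterpart of your term $-\xi\E[V_0\e^{-isV_0}]$, and $\E[V\e^{-isV}]$ is not identically zero, so the cancellation you anticipate cannot occur. A quick sanity check makes the mismatch concrete: under Assumption~\ref{assum:stationary} every summand vanishes at $s=0$, hence $\kappa_2(0;G)=0$; your expression indeed tends to $0$ as $s\downarrow 0$ (the two terms cancel in the limit, using $\E V=\ell^{(2)}(0)/(2\ell\apost(0))$), whereas the displayed formula tends to $\xi\ell^{(2)}(0)/(2\ell\apost(0))\neq 0$. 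So the deferred "finite deterministic computation" cannot deliver the claimed identity from your intermediate expression.

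Where you and the paper part ways is precisely this point: the paper's proof pulls the factor $\E\e^{isV}$ out of the sum and multiplies by the deviation series evaluated at $V_0=0$, rather than keeping the weight $\e^{-isV_0}$ inside the expectation as your (more careful) conditioning does; the two manipulations differ by exactly your $\E[V_0\e^{-isV_0}]$ term. As written, your proposal therefore does not prove the displayed identity — you would either have to reproduce the paper's factorization or accept that your derivation yields a different closed form. Note, however, that the part of the lemma actually used downstream, the uniform bound $\sup_{G\in\mathcal{G}}\sup_{s\geq 0}|\kappa_2(s;G)|\leq K_2$, does follow directly from your expression: $|\E[V\e^{-isV}]|\leq\E V=\ell^{(2)}(0)/(2\ell\apost(0))$, $|\E\e^{-isV}|\leq 1$, and $\ell^{(2)}(0)=\lambda\E B^2$ together with $\ell\apost(0)\geq\delta$ are uniformly controlled by Assumptions~\ref{assum:G_class}(a)--(b); so that half you could complete without the closed form, and without invoking Lemma~\ref{lemma:phi_bound} at all.
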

\begin{proof}
The proof follows similar arguments to as in the previous proofs. In particular, Lemmas~\ref{lemma:quad_ergodic} and~\ref{lemma:conv_Vk=0} yield
\begin{align*}
\kappa_2(s;G)&=\sum_{k=1}^\infty \left(\E[\e^{isV_0}\mathbf{1}\{V_k=0\}]-\E[\e^{is V}]\P(V=0)\right) = \E[\e^{is V}]\E\left[\sum_{k=1}^\infty \left(\P(V_k=0|V_0)-\P(V=0)\right)\right] \\
&= \E[\e^{is V}]\E[k_2(0)] =\frac{\xi \varphi\apost(0)\ell^{(2)}(0)s}{2\ell\apost(0)\varphi(s)} \ .
\end{align*}
Lemma~\ref{lemma:phi_bound} established that $\sup_{G\in\mathcal{G}}\sup_{s\geq 0}|\varphi(s)|/s<\infty$,  and thus $\sup_{G\in\mathcal{G}}\sup_{s\geq 0}|\kappa_2(s;G)|<\infty$ as well.
\end{proof}

%%%
\begin{lemma}\label{lemma:conv_Vkeis}
Suppose Assumptions~\ref{assum:G_class}-\ref{assum:stationary} hold. Then, for any $s\in\mathbb{R}$ and $G\in\mathcal{G}$,
\begin{align*}
\kappa_3(s;G) &=\sum_{k=1}^\infty\left(\E[\mathbf{1}\{V_0=0\}\e^{-is V_k}]-\P(V=0)\E[\e^{-is V}]\right) \\
&=\xi\ell'(0)\left(\frac{s\varphi\apost(0)}{\varphi(s)^2} -\frac{s^2\varphi\apost(s)\varphi\apost(0)}{\varphi(s)^3}+ \frac{s\varphi\apost(0)\ell^{(2)}(0)}{2\varphi(s)\ell\apost(0)^2}\right)\ .
\end{align*}
Moreover, there exists a constant $K_3$ such that 
\begin{align*}
\sup_{G\in\mathcal{G}}\sup_{s\geq 0}|\kappa_3(s;G)|\leq K_3<\infty \ .
\end{align*}
\end{lemma}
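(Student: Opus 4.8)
The proof will follow exactly the same template as the proofs of Lemmas~\ref{lemma:conv_kV} and~\ref{lemma:conv_V0eis}: condition on the initial state, use the summable-deviation identity from \cite[Lemma~5]{MR2021} to convert the infinite sum into the ``integrated semigroup'' evaluated at an exponential time, plug in the explicit transition kernel \eqref{eq:lapVT}--\eqref{eq:P0_V0}, and compute the resulting $q\downarrow 0$ limit via repeated L'H\^opital. The key structural observation is that here the ``anchored'' coordinate is $\mathbf{1}\{V_0=0\}$ rather than $\e^{-isV_0}$, so the roles are swapped relative to Lemma~\ref{lemma:conv_V0eis}: I first use stationarity and Fubini to write
\[
\kappa_3(s;G)=\E\!\left[\mathbf{1}\{V_0=0\}\sum_{k=0}^\infty\Big(\E[\e^{-isV_k}\mid V_0]-\E[\e^{-isV}]\Big)\right]
=\E\!\left[\mathbf{1}\{V_0=0\}\,\xi k_1(V_0)\right],
\]
where the inner sum was already evaluated in Lemma~\ref{lemma:conv_gV} as $\xi k_1(V_0)$ (the summability needed to justify the interchange is supplied by Lemma~\ref{lemma:quad_ergodic} with $g(v)=\e^{-isv}$, which is bounded). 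Since $k_1(v)$ contains the term $2\e^{isv}(isv-1)$, on the event $\{V_0=0\}$ this collapses to $k_1(0)=\frac{1}{2\varphi(s)}\big(-2+\frac{s\varphi'(0)}{\varphi(s)}(2+\frac{\varphi(s)\ell^{(2)}(0)}{\ell'(0)^2})\big)$, and multiplying by $\P(V_0=0)=\P(V=0)=\ell'(0)$ (using \eqref{eq:P0} and \eqref{eq:ell_p0}) and by $\xi$ should, after simplification, yield precisely the stated closed form $\xi\ell'(0)\big(\frac{s\varphi'(0)}{\varphi(s)^2}-\frac{s^2\varphi'(s)\varphi'(0)}{\varphi(s)^3}+\frac{s\varphi'(0)\ell^{(2)}(0)}{2\varphi(s)\ell'(0)^2}\big)$ — note the first two terms combine $k_1(0)$'s constant $-2/(2\varphi(s))=-1/\varphi(s)$ with the identity $\E[iV_0\e^{isV_0}]=\varphi'(0)(\varphi(s)-s\varphi'(s))/\varphi(s)^2$ that was used in Lemma~\ref{lemma:conv_kV}; I should double-check whether the cleanest route is via $k_1(0)$ directly or by re-deriving $\E[\mathbf{1}\{V_0=0\}k_1(V_0)]$ so that the $\E[iV_0\e^{isV_0}]$-type term appears, since the target expression's structure ($\frac{s\varphi'(0)}{\varphi(s)^2}-\frac{s^2\varphi'(s)\varphi'(0)}{\varphi(s)^3}$) strongly suggests the latter.

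For the uniform bound $\sup_{G\in\mathcal{G}}\sup_{s\geq0}|\kappa_3(s;G)|\leq K_3<\infty$ I would argue exactly as in Lemma~\ref{lemma:conv_kV}: the function $s\mapsto\kappa_3(s;G)$ is continuous on $[0,\infty)$ (the apparent singularities at $s=0$ are removable — this is checked by L'H\^opital applied to $\kappa_3(0;G)$, reducing finiteness to $\sup_{G\in\mathcal{G}}|\varphi^{(3)}(0)|<\infty$, which holds by \eqref{eq:ell_p30} together with Assumption~\ref{assum:G_class}(b)); for large $s$, boundedness of $|\varphi'(0)|$, $\ell^{(2)}(0)$, $\ell'(0)$ uniformly over $\mathcal{G}$ (again Assumption~\ref{assum:G_class}(a)--(b)), combined with Lemma~\ref{lemma:phi_bound}'s uniform bounds $|\varphi(s)|/s\leq M_1$ and $|\varphi'(s)|/s\leq M_2$, shows each of the three terms $\frac{s\varphi'(0)}{\varphi(s)^2}$, $\frac{s^2\varphi'(s)\varphi'(0)}{\varphi(s)^3}$, $\frac{s\varphi'(0)\ell^{(2)}(0)}{2\varphi(s)\ell'(0)^2}$ stays bounded — e.g. $|s^2\varphi'(s)/\varphi(s)^3|=|s/\varphi(s)|^3\cdot|\varphi'(s)/s|\leq$ (lower bound on $|\varphi(s)|/s$)$^{-3}M_2$, where a uniform positive lower bound on $|\varphi(s)|/s$ away from $0$ follows from the stability gap $\delta$ and the fact that $\varphi(s)/s\to i(\lambda\E B-1)\neq0$. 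Since the factor $\ell'(0)=\P(V=0)\geq\delta$ is bounded away from $0$, no degeneracy arises there either.

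The main obstacle is purely the algebra in the first part: matching the output of the L'H\^opital computation (or of $\xi\ell'(0)\,\E[\mathbf{1}\{V_0=0\}k_1(V_0)]$) to the three-term form in the statement requires carefully tracking the $\psi$-derivative identities \eqref{eq:psi_p0}--\eqref{eq:psi_p30} and the relation $\varphi(s)=\ell(-is)$, and making sure the $\e^{-\psi(q)V_0}$ terms are correctly handled at $V_0=0$ (where $\e^{-\psi(q)\cdot0}=1$ and all the $V_0$-linear and $V_0$-quadratic contributions vanish) — this is a book-keeping exercise rather than a conceptual one, but it is the only place an error could creep in. Everything else is a direct transcription of the method already used three times in this subsection.
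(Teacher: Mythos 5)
Your probabilistic reduction is sound and follows the paper's template (condition on $V_0$, justify the interchange of sum and expectation via Lemma~\ref{lemma:quad_ergodic}, invoke Lemma~\ref{lemma:conv_gV}), but the step you defer as ``book-keeping'' is exactly where the proposal fails to close. Anchoring on the event $\{V_0=0\}$ gives, as you write, $\kappa_3(s;G)=\xi\,\P(V=0)\,k_1(0)$ with $k_1(0)=\frac{1}{2\varphi(s)}\bigl(-2+\frac{s\varphi'(0)}{\varphi(s)}\bigl(2+\frac{\varphi(s)\ell^{(2)}(0)}{\ell'(0)^2}\bigr)\bigr)$, an expression containing a $-1/\varphi(s)$ term and no $\varphi'(s)$ term. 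The right-hand side of the lemma, by contrast, is $\xi\,\ell'(0)\,\E k_1(V_0)$: its middle term $-s^2\varphi'(s)\varphi'(0)/\varphi(s)^3$ arises from the identity $\E[iV_0\e^{isV_0}]=\varphi'(0)(\varphi(s)-s\varphi'(s))/\varphi(s)^2$ used in Lemma~\ref{lemma:conv_kV}. Since $k_1(0)\neq\E k_1(V_0)$ in general (equality would require $is\,\E[V\e^{isV}]=\E\e^{isV}-1$, which already fails at second order in $s$ whenever $\E V^2>0$), your route produces a genuinely different function of $s$ than the displayed one, and your claim that simplification ``should yield precisely the stated closed form'' is not substantiated. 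The fallback you suggest, re-deriving $\E[\mathbf{1}\{V_0=0\}k_1(V_0)]$, does not help: on $\{V_0=0\}$ all $V_0$-dependence in $k_1$ collapses, so $\E[\mathbf{1}\{V_0=0\}k_1(V_0)]=\P(V=0)k_1(0)$ again and no $\E[iV_0\e^{isV_0}]$-type term can appear. The paper's own proof reaches the stated formula by passing to $\P(V=0)\,\E\bigl[\sum_{k}(\E[\e^{-isV_k}\mid V_0]-\E\e^{-isV})\bigr]=\xi\P(V=0)\E k_1(V_0)$, i.e.\ by decoupling the indicator $\mathbf{1}\{V_0=0\}$ from the conditional expectation and averaging $k_1$ over the stationary $V_0$; that decoupling is not supplied by your plan (and is not a consequence of stationarity, since $\mathbf{1}\{V_0=0\}$ and $\E[\e^{-isV_k}\mid V_0]$ are dependent). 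So, as written, the proposal proves an identity with $k_1(0)$ where the lemma has $\E k_1(V_0)$, and the claimed equality with the lemma's right-hand side is left unestablished; you must either justify the decoupling step the paper uses or confront the mismatch explicitly rather than treating it as routine algebra.

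The second half of your plan, the uniform bound $\sup_{G\in\mathcal{G}}\sup_{s\geq0}|\kappa_3(s;G)|<\infty$, is fine and essentially the paper's argument: removable singularity at $s=0$ reduced via L'H\^opital to $\sup_{G\in\mathcal{G}}|\varphi^{(3)}(0)|<\infty$ (Assumption~\ref{assum:G_class}(b) with \eqref{eq:ell_p30}), and large-$s$ control from Lemma~\ref{lemma:phi_bound} together with a uniform lower bound of the form $|\varphi(s)|\geq\delta s$, which indeed follows from $\lambda\E\sin(sB)\leq s\lambda\E B\leq(1-\delta)s$ under Assumption~\ref{assum:G_class}(a); spelling out that last inequality would make this part complete.
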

\begin{proof}
As in the previous proofs, applying Lemmas~\ref{lemma:quad_ergodic} and~\ref{lemma:conv_gV} yields
\begin{align*}
\kappa_3(s;G)&=\sum_{k=1}^\infty \left(\E[\mathbf{1}\{V_0=0\}\e^{-is V_k}]-\P(V=0)\E[\e^{-is V}]\right) \\
&= \P(V=0)\E\left[\sum_{k=1}^\infty \left(\E[\e^{-is V_k}|V_0]-\E[\e^{-is V}]\right)\right] \\
&= \xi\P(V=0)\E[k_1(V_0)] =\xi\ell'(0)\left(\frac{s\varphi\apost(0)}{\varphi(s)^2} -\frac{s^2\varphi\apost(s)\varphi\apost(0)}{\varphi(s)^3}+ \frac{s\varphi\apost(0)\ell^{(2)}(0)}{2\varphi(s)\ell\apost(0)^2}\right) \ .
\end{align*}
Finally, repeating the arguments in the proof of Lemma~\ref{lemma:conv_kV} and applying Lemma~\ref{lemma:phi_bound} yields that $\sup_{s\geq 0}|\kappa_3(s;G)|$ is uniformly bounded on $\mathcal{G}$.
\end{proof}

%%%%%%%%%%%%%%%%%%%%%%%%%%%%%%%%%%
\subsection{Proofs of Propositions~\ref{prop:cf_cov_rate} and~\ref{prop:Z_cov_rate}}\label{sec:prop_proofs}

%%%
\begin{proof}[Proof of Proposition~\ref{prop:cf_cov_rate}]
Assume $G\in\mathcal{G}$, then $\lim_{s\to\infty}|\E[\e^{is V}]|=\P(V=0)>\delta$ by Assumption~\ref{assum:G_class}(a). Combining this with \eqref{eq:rho_bound} in Lemma~\ref{lemma:phi_bound} we have that for any $\delta\in(0,\frac{1}{2})$,
\begin{align*}
    \epsilon< \frac{\delta}{4}<\frac{\P(V=0)}{2(2-\delta)} \leq\frac{\inf_{s\geq 0}|\E[\e^{is V}]|}{2}\ . 
\end{align*}
Hence, 
\begin{align*}
\left\lbrace \left|\frac{1}{n}\sum_{j=1}^n\e^{is V_j}\right|\leq \epsilon \right\rbrace \subseteq \left\lbrace \left|\frac{1}{n}\sum_{j=1}^n\e^{-is V_j}-\E[\e^{is V}]\right|>\epsilon\right\rbrace \ .
\end{align*}
By Chebyshev's inequality,
\begin{align*}
\P\left(\left|\frac{1}{n}\sum_{j=1}^n\e^{is V_j}\right|\leq \epsilon \right)\leq \P \left(\left|\frac{1}{n}\sum_{j=1}^n\e^{is V_j}-\E[\e^{is V}]\right|>\epsilon\right)\leq \frac{\Var\left[\frac{1}{n}\sum_{j=1}^n\e^{is V_j}\right]}{\epsilon^2} \ .
\end{align*}
Assumption~\ref{assum:stationary} implies that $\Cov(g(V_j),g(V_{j+k}))=\Cov(g(V_0),g(V_{k}))$ for any function $g$ and $j,k\geq 0$, hence
\begin{align*}
\Var\left[\frac{1}{n}\sum_{j=1}^n\e^{is V_j}\right]&= \frac{1}{n^2}\sum_{j=1}^n\sum_{k=1}^n \Cov(\e^{is V_j},\e^{is V_k}) \\
&= \frac{1}{n}\left(\Var[\e^{is V}]+\sum_{k=1}^{n-1}\frac{n-k}{n}\Cov(\e^{is V_0},\e^{is V_k})+\sum_{k=1}^{n-1}\frac{n-k}{n}\Cov(\e^{is V_k},\e^{is V_0})\right)\ .
\end{align*}
Clearly, $\Var[\e^{is V}]$ is uniformly bounded in $\mathcal{G}$ by a constant (for any $s\geq 0$). As the random variables in the covariance series are complex valued and the CF is hermitian, we have that
\begin{align*}
\Cov(\e^{is V_0},\e^{is V_k}) &= \E[\e^{-is V_0+is V_k}]-\E[\e^{-is V_0}]\E[\e^{is V_0}] \ , \\
\Cov(\e^{is V_k},\e^{is V_0}) &= \E[\e^{is V_0-is V_k}]-\E[\e^{is V_0}]\E[\e^{-is V_0}] \ .
\end{align*}
Applying Lemma~\ref{lemma:conv_kV}, once with $s_1=-s$ and $s_2=s$, and again with $s_1=s$ and $s_2=-s$, under Assumption~\ref{assum:G_class}, there exists a constant $K_1$ such that 
\begin{align*}
\left|\sum_{k=1}^\infty \left(\E[\e^{-is V_0+is V_k}]-\E[\e^{-is V_0}]\E[\e^{is V_0}]\right)\right| \leq K_1 <\infty\ , \forall s\geq 0\ .
\end{align*}
Then, for any $d>0$,
\begin{align*}
   -K_1-d<\max\left\lbrace\sum_{k=1}^\infty\Re\left\lbrace\Cov(\e^{is V_0},\e^{is V_k})\right\rbrace,\sum_{k=1}^\infty\Im\left\lbrace\Cov(\e^{is V_0},\e^{is V_k})\right\rbrace\right\rbrace<K_1+d \ ,
\end{align*}
which implies that
\begin{align*}
    \lim_{n\to\infty}\frac{1}{n}\sum_{k=1}^{n-1}\Cov(\e^{is V_0},\e^{is V_k})=0 \ .
\end{align*}
Observe that for $n\geq 2$,
\begin{align*}
    \left|\sum_{k=1}^{n-1}\frac{k}{n}\Cov(\e^{is V_0},\e^{is V_k})\right|\leq  \left|\sum_{k=1}^{n-1}\Cov(\e^{is V_0},\e^{is V_k})\right| \ ,
\end{align*}
hence we also have that
\begin{align*}
    \lim_{n\to\infty}\frac{1}{n}\sum_{k=1}^{n-1}\frac{k}{n}\Cov(\e^{is V_0},\e^{is V_k})=0 \ .
\end{align*}
As the bound in Lemma~\ref{lemma:conv_kV} is uniform on $\mathcal{G}$, we conclude that there exists a constant $0<C_1<\infty$ such that
\begin{align*}
\sup_{G\in\mathcal{G}}\Var\left[\frac{1}{n}\sum_{j=1}^n\e^{is V_j}\right]\leq \frac{C_1}{n} \ .
\end{align*}
\end{proof}

%%%
\begin{proof}[Proof of Proposition~\ref{prop:Z_cov_rate}]
By construction of the estimator, we have that $\E[Z_j(s)]=0$ for any $j\geq 1$. Furthermore, Assumption~\ref{assum:stationary} implies that $Z_1(s)$ is stationary, and thus
\begin{align*}
\E|\frac{1}{n}\sum_{j=1}^nZ_j(s)|^2 &= \Var\left[\frac{1}{n}\sum_{j=1}^nZ_j(s)\right] \\
&=\frac{1}{n^2}\left(n\Var[Z_1(s)]+\sum_{k=2}^n(n-k)\Cov(Z_1(s),Z_k(s))+\sum_{k=2}^{n}(n-k)\Cov(Z_k(s),Z_1(s))\right)\\
&=\frac{1}{n}\left(\Var[Z_1(s)]+\sum_{k=2}^n\frac{n-k}{n}\E[Z_1(s)\overline{Z_k(s)}]+\sum_{k=2}^n\frac{n-k}{n}\E[Z_k(s)\overline{Z_1(s)}]\right)\
\ .
\end{align*}
We next construct a bound for the first series, i.e., the sum of $\E[Z_1(s)\overline{Z_k(s)}]$. An identical argument establishes the same bound for the last series with the term of $\E[Z_k(s)\overline{Z_1(s)}]$.

For $j\geq 1$ and $s\geq 0$, let $R_j(s)=\Re\{Z_j(s)\}$ and $I_j(s)=-\Im\{Z_j(s)\}$, i.e., $Z_j(s)=R_j(s)+iI_j(s)$. Let $\varphi(s)=u_s+iv_s$, then we can write
\begin{align*}
R_j(s) &= (\xi-u_s)\cos(s V_j)+v_s\sin(s V_j)-\xi\cos(sV_{j-1}) \ , \\ 
I_j(s) &= (\xi-u_s)\sin(s V_j)-v_s\cos(s V_j)-\xi\sin(sV_{j-1})-s\mathbf{1}\{V_j=0)\} \ .
\end{align*}
Straightforward manipulations yield
\begin{align*}
\overline{Z_j(s)}&= R_j(s)-iI_j(s)= (\xi-u_s-v_s)(\cos(sV_j)-i\sin(sV_j))-\xi(\cos(sV_{j-1}+i\sin(sV_{j-1}))+is\mathbf{1}\{V_j=0\} \\
&= (\xi-\varphi(-s))\e^{-is V_j}-\xi\e^{-is V_{j-1}}+is\mathbf{1}\{V_j=0\} \ ,
\end{align*}
where the last equality used \eqref{eq:phi_h}, i.e., the hermitian property of $\varphi$.
Combining the above yields 
\begin{align*}
Z_1(s)\overline{Z_k(s)}&= (\xi-\varphi(s))(\xi-\varphi(-s))\e^{-is(V_k-V_1)}+\xi^2\e^{-is(V_{k-1}-V_{0})}+s^2\mathbf{1}\{V_1=V_k=0\} \\ 
& -\xi(\xi-\varphi(s))\e^{-is(V_{k-1}-V_{0})}+is(\xi-\varphi(s))\e^{isV_1}\mathbf{1}\{V_k=0\} -\xi(\xi-\varphi(s))\e^{-is(V_k-V_{0})}\\
& - is(\xi-\varphi(-s))\e^{-isV_k}\mathbf{1}\{V_1=0\} -\xi is\e^{is V_0}\mathbf{1}\{V_k=0\}-\xi is\e^{-is V_{k-1}}\mathbf{1}\{V_1=0\} \\
&= A_k(s)+B_k(s)+C_k(s)+D_k(s) \ ,
\end{align*}
where 
\begin{align*}
A_k(s) &= (\xi-\varphi(s))(\xi-\varphi(-s))\e^{-is(V_k-V_1)}+\xi\varphi(s)\e^{-is(V_{k-1}-V_{0})}-\xi(\xi-\varphi(s))\e^{-is(V_k-V_{0})} \ , \\
B_k(s) &= is\left[(\xi-\varphi(s))\e^{isV_1}\mathbf{1}\{V_k=0\} -\xi \e^{is V_0}\mathbf{1}\{V_k=0\}\right] \ , \\
C_k(s) &=-is\left[(\xi-\varphi(-s))\e^{-isV_k}\mathbf{1}\{V_1=0\} +\xi\e^{-is V_{k-1}}\mathbf{1}\{V_1=0\}\right] \ , \\
D_k(s) &= s^2\mathbf{1}\{V_1=V_k=0\} \ .
\end{align*}
Lemma~\ref{lemma:conv_kV} implies that there exists a constant $K_4<\infty$ such that 
\begin{align*}
\left|\sum_{k=1}^\infty \E[A_k(s)]-\E[A_1(s)]\right|\leq K_4\ ,
\end{align*}
where we $A_1(s)$ is the stationary mean due to Assumption~\ref{assum:stationary}. Similarly, Lemmas~\ref{lemma:conv_V0eis}, \ref{lemma:conv_Vkeis} and \ref{lemma:conv_Vk0=0}, respectively imply that there exist constants $K_5,K_6,K_7$ such that
\begin{align*}
\left|\sum_{k=1}^\infty \E[B_k(s)]-\E[B_1(s)]\right| &\leq s K_5<\infty\ , \\
\left|\sum_{k=1}^\infty \E[C_k(s)]-\E[C_1(s)]\right| &\leq s K_6<\infty\ , \\
\left|\sum_{k=1}^\infty \E[D_k(s)]-\E[D_1(s)]\right| &\leq s^2 K_7<\infty\ .
\end{align*} 
Let $\bar{K}=\max\{K_4,K_5,K_6,K_7\}$. As $\E[Z_1(s)]=0$, combining the above with the triangle inequality yields
\begin{align*}
\left|\sum_{k=1}^n\E[Z_1(s)\overline{Z_k(s)}]\right|=\left|\sum_{k=1}^n\left(\E[Z_1(s)\overline{Z_k(s)}]-\E^2[Z_1(s)]\right) \right|\leq \max\{1,s^2\}\bar{K} \ .
\end{align*}
By the same argument as in the proof of Proposition~\ref{prop:cf_cov_rate} we conclude
\begin{align*}
\left|\sum_{k=2}^n\frac{n-k}{n}\E[Z_1(s)\overline{Z_k(s)}] \right|\leq 2\max\{1,s^2\}\bar{K}\ .
\end{align*}
For the lower bound we can focus on the variance term
\begin{align*}
\Var[Z_1(s)]=\E[Z_1(s)\overline{Z_1(s)}]\ ,
\end{align*} 
because if it has a lower bound of order $s^2/n$ then the order of the covariance term is not relevant (due to the upper bound established above). The exponential functions and indicators in the above expression for $Z_1(s)\overline{Z_1(s)}$ are all bounded by one in absolute value, hence $\E[Z_1(s)\overline{Z_1(s)}]$ is bounded from above and below by quadratic functions of $s\geq 1$. Thus, the proof is complete.
\end{proof}

%%%%%%%%%%%%%%%%%
\section{Unknown arrival rate}\label{sec:lambda}

So far we have assumed that the arrival rate of jobs, $\lambda$, is known. However, this may not be the case in various applications. Therefore, we would like to be able to simultaneously estimate both elements of the input process, i.e., $G$ and $\lambda$. One way to do this is by utilizing the stationary expected utilization rate:
\begin{align*}
\P(V>0)=\rho=\lambda\E[B]=\lambda\int_0^\infty (1-G(x))\diff x\ .
\end{align*}
Plugging in the inversion formula \eqref{eq:G_inv} we have,
\begin{align*}
\P(V>0) = \lambda\int_0^\infty \left(\frac{1}{\pi}\int_0^\infty\frac{1}{s} \Im\{\gamma(s)\e^{-i s x}\}\diff s-\frac{1}{2}\right)\diff x \ .
\end{align*}
The estimators $(\widehat{\lambda}_n,\widehat{G}_n^{(h)})$ are given by solving 
\begin{equation}\label{eq:lambda_hat}
\frac{1}{n}\sum_{j=1}^n\mathbf{1}\{V_j>0\}=\widehat{\lambda}_n\int_0^k(1- \widehat{G}_n^{(h)}(x;\widehat{\lambda}_n))\diff x\ ,
\end{equation}
where $\widehat{G}_n^{(h)}(x;\widehat{\lambda}_n)$ is the inversion estimator defined in Section~\ref{sec:estim} with $\lambda=\widehat{\lambda}_n$ and $k>0$ is an additional truncation parameter. Observe that solving \eqref{eq:lambda_hat} entails another step of numerical integration. The convergence rate results of Section~\ref{sec:main} do not carry over directly to the simultaneous estimator. Firstly, there is an additional bias term due to the truncation parameter $k$ of the second integral.  The variance-bias decomposition in this case is no longer straightforward due to the doubly infinite integration domain. Secondly, Proposition~\ref{prop:Z_cov_rate}, which is a crucial step towards Theorem~\ref{thm:gamma_s_rate},  requires modification to take into account the covariance of $\widehat{\lambda}_n$ and the sequence $(Z_1(s),\ldots,Z_n(s))$.  For these reasons, establishing the asymptotic performance of the simultaneous estimator is is left as an open challenge. Nevertheless, simulation experiments suggest that for a large sample, the accuracy of the estimator is close to that of the original estimator that relies on the known arrival rate. 

Figure~\ref{fig:G_lambda} compares the CDF estimators, with and without knowledge of the true $\lambda$, for a mixture of Gamma distributions with $\bm{\alpha}=(1.5,5)$, $\bm{\beta}=(0.8,10)$, and $\bm{p}=(0.4,0.6)$. A sample of $n=40,000$ workload observations was taken, hence Theorem~\ref{thm:rate} prescribes selecting $h=n^{1/5}\approx 8$. Indeed, this choice yields a very good fit when the true $\lambda$ is used. However, for the simultaneous estimator of both the CDF and arrival rate given in \eqref{eq:lambda_hat}, the fit appears to be better for lower values of $h\in[4,6]$. This is likely due to an increased variance of the error in the CDF estimation step. While the estimator with known $\lambda$ clearly outperforms the one without, the latter still yields a good estimator and appears to converge to the true function as the sample size grows (possibly at a slower rate).

 %%%
\begin{figure}[h]
\centering
\includegraphics[width=0.8\textwidth,height=9cm]{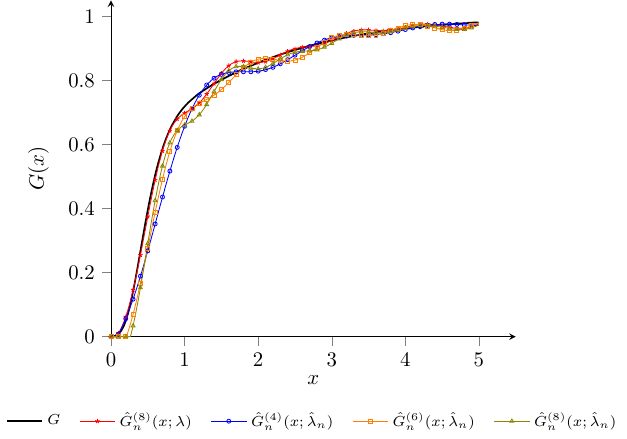}
\caption{The estimators are based on a simulation of $n=40,000$ workload observations with sampling rate $\xi=0.5$. The arrival rate is $\lambda=1$ and the true CDF $G$ (black solid line) is a mixture of gamma distributions $\bm{\alpha}=(1.5,5)$, $\bm{\beta}=(0.8,10)$ and $\bm{p}=(0.4,0.6)$. The true CDF is compared with the estimator with the true $\lambda$ (with $h=8$) and the estimated $\widehat{\lambda}_n$ for $h\in\{4,6,8\}$.} \label{fig:G_lambda}
\end{figure}

Figure~\ref{fig:G_lambda_b} repeats the experiment for a system with lighter load. As opposed the case with known $\lambda$, here the estimation accuracy is actually lower for a system with a lower arrival rate. A possible explanation for this is that the estimation equation \eqref{eq:lambda_hat} becomes flatter, thus resulting in higher bias of the numerical integration step.  

 %%%
\begin{figure}[h]
\centering
\includegraphics[width=0.8\textwidth,height=9cm]
{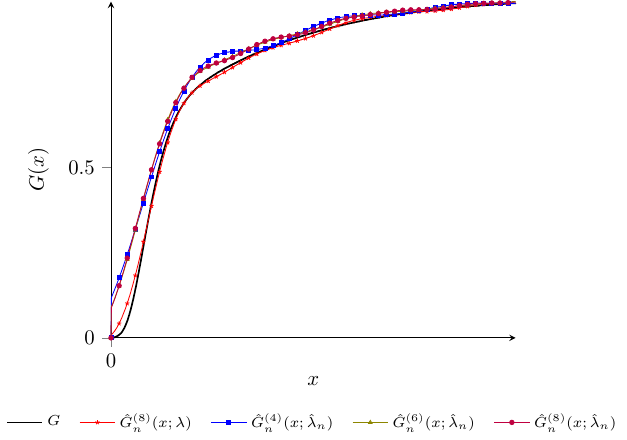}
\caption{The estimators are based on a simulation of $n=40,000$ workload observations with sampling rate $\xi=0.5$. The arrival rate is $\lambda=0.2$ and the true CDF $G$ (black solid line) is a mixture of gamma distributions $\bm{\alpha}=(1.5,5)$, $\bm{\beta}=(0.8,10)$ and $\bm{p}=(0.4,0.6)$. The true CDF is compared with the estimator with the true $\lambda$ (with $h=8$) and the estimated $\widehat{\lambda}_n$ for $h\in\{4,6,8\}$.} \label{fig:G_lambda_b}
\end{figure}

%%%%%%%%%%%%%%%%%%%%%%%%%%%%%%%%%%
\section{Concluding remarks}\label{sec:conclusion}

This work has presented a nonparametric method for the estimation of the job-size CDF in a queue with periodic workload observations. Assuming that the distributions belong to a smoothness class satisfying $ \lim_{s\to\infty} |\gamma(s)|s^{\eta}\leq C_0$, it was shown that the convergence rate of the risk is $O\left(n^{-\eta/(1+\eta)}\right)$. For some distributions in this class, the convergence rate is actually faster due to the faster decay of the bias term. For example,  if $ \lim_{s\to\infty} |\gamma(s)|\e^{-\omega s}\leq C$ for some $\omega>0$ (e.g., the Normal distribution truncated at zero),  then an improved rate can be obtained. Specifically,  applying the arguments used in the proof of Theorem~\ref{thm:rate} it can be shown that setting $h_n=L_w(\sqrt{n})\approx \log \sqrt{n}-\log\log \sqrt{n}$, where $L_w(\cdot)$ is the Lambert W function,  yields a convergence rate of $O\left(L_w(\sqrt{n})^2/n\right)$. An open question that remains is whether the estimator is rate-optimal, in the sense that no other estimator based on the same workload observations achieves a faster rate of convergence. There is no reason to believe this is the case, but verifying that one can actually do better appears to be a very challenging problem due to the complex dependence structure of the data. As far as the author is aware, the estimator for $G$ presented here is the only nonparametric estimator with a rate guarantee for an M/G/1 system with workload observations. An additional issue is that the smoothness parameter $\eta$ of the underlying class of distributions is unknown,  hence the truncation parameter $h_n$ must be selected heuristically in practice. This calls for development of an adaptive estimation technique that learns the smoothness parameter from the data  (see \cite{DGJ2011} for such a scheme in the deconvolution setting).

A direct extension that may be more realistic in some settings is sampling the workload with a probe, which affects performance. For instance, suppose that every probe adds a small deterministic amount of work $d>0$. The only modification required is adapting the conditional CF in \eqref{eq:lapVT} with the previous workload being $V_{j-1}+d$ instead of $V_{j-1}$. The estimator will now include new constants of the form $\e^{id}$, which entail some adaptations to the computations presented here, but this has no effect on the covariance structure of the observations. An additional extension is to allow for deterministic sampling of the workload instead of Poisson sampling. In this case, one cannot use an exact estimation equation because the conditional workload CF based on the previous observation is not tractable. However, this can be overcome using a resampling scheme that mimics a Poisson process, which was introduced in \cite{NMR2024}. There it is shown that the characteristic exponent can be estimated consistently when the observations are equidistant, and this carries over to the CF estimator presented here. Establishing a guarantee on the convergence rate of the covariance terms and, subsequently, the risk in our setting is an open problem for the deterministic sampling scheme.

The approach presented here can also be applied to nonparametric estimation of the job-size density, assuming it exists and satisfies some regularity conditions. An interesting generalization is to extend the analysis to the setting of a Lévy-driven queue, where the input is some general subordinator and not necessarily a compound Poisson process (see \cite{DM2015}). The goal here would be to estimate the underlying Lévy measure of the input. The CF estimator in this case can be obtained directly from \cite{RBM2019}, which dealt with this general setting. However, inverting the net input CF to the Lévy measure is not straightforward. One possibility is to apply the inversion technique used in \cite{G2012}, which entails the estimation of the first and second derivatives of the CF. In the author's view, these directions raise various interesting and challenging questions to be considered in future work. 

Another extension to consider is that of a Markov-modulated input process for which similar estimation equations can be constructed for the Poisson sampling scheme. The main idea here is to utilize the framework of \cite{AK2000} that constructs an exponential (Wald type) martingale for the net-input process where the arrival rate is modulated by a CTMC with transition matrix $Q$ on a finite state space $\{\lambda_1,\ldots,\lambda_p\}$. The CF of the net-input  process can represented by the solution of $|Q+\mathrm{diag}({\boldsymbol \varphi})|=0$, where
\begin{align*}
    \varphi_j(s)=\lambda_j(\gamma(s)-1)-is\ , j=1,\ldots,p\ .
\end{align*}
The methods presented in this work may be applied once the conditional workload distribution at Poisson sampling times is derived. Stochastic root finding will likely be necessary for the matrix inversion described above. An obvious obstacle is that knowledge of the arrival rates and transition matrix, or a reliable  method for their estimation, is necessary.

%%%%%%%%%%%%%%%%%%%%%%%%%%%%%%%%%%
\section*{Acknowledgements}
This work was supported by the Israel Science Foundation (ISF), grant no. 1361/23.  The author thanks the anonymous reviewers for their valuable feedback. The author is grateful to Alexander Goldenshluger and Michel Mandjes for many fruitful discussions and for their helpful feedback on drafts of this work.

%%%%%%%%%%%%%%%%%%%%%%%%%%%%%%%%%%
\bibliographystyle{abbrv}
\small{

}

\end{document}